\begin{document}




\newcommand{\norm}[1]{\left\Vert#1\right\Vert}
\newcommand{\abs}[1]{\left\vert#1\right\vert}
\newcommand{\set}[1]{\left\{#1\right\}}
\newcommand{\Real}{\mathbb{R}}
\newcommand{\RR}{\mathbb{R}^n}
\newcommand{\supp}{\operatorname{supp}}
\newcommand{\card}{\operatorname{card}}
\renewcommand{\L}{\mathcal{L}}
\renewcommand{\P}{\mathcal{P}}
\newcommand{\T}{\mathcal{T}}
\newcommand{\A}{\mathbb{A}}
\newcommand{\K}{\mathcal{K}}
\renewcommand{\S}{\mathcal{S}}
\newcommand{\blue}[1]{\textcolor{blue}{#1}}
\newcommand{\red}[1]{\textcolor{red}{#1}}
\newcommand{\Id}{\operatorname{I}}

\newtheorem{thm}{Theorem}[section]
\newtheorem{prop}[thm]{Proposition}
\newtheorem{cor}[thm]{Corollary}
\newtheorem{lem}[thm]{Lemma}
\newtheorem{lemma}[thm]{Lemma}
\newtheorem{exams}[thm]{Examples}
\theoremstyle{definition}
\newtheorem{defn}[thm]{Definition}
\newtheorem{rem}[thm]{Remark}

\numberwithin{equation}{section}

\title[Neumann boundary condition]
{ Application BMO type space to parabolic equations of Navier-Stokes type with the Neumann boundary condition}

 \author[M.H. Yang and C. Zhang]{Minghua Yang\  and\ Chao Zhang}

 \address{Department of Mathematics\\Jiangxi University of Finance and Economics \\
         Nanchang 330032, PR China}
 \email{ymh20062007@163.com}

 \address{School of Statistics and Mathematics \\
             Zhejiang Gongshang University \\
             Hangzhou 310018, PR China}
 \email{zaoyangzhangchao@163.com}

\thanks{M. H. Yang was supported by the Postdoctoral Science Foundation of Jiangxi Province (grant no. 2017KY23) and Educational Commission Science Programm of Jiangxi Province (grant no. GJJ170345). C. Zhang was  supported by the Natural Science Foundation of Zhejiang Province (Grant No. LY18A010006) and the National Natural Science Foundation of China (Grant No. 11401525)}
 \subjclass[2010]{42B35, 42B37, 35J10,  47F05}
\keywords{Neumann operator,  BMO space, Carleson measure, Navier-Stokes equation}

\begin{abstract}  Let $\L$ be a Neumann operator of the form $\L=-\Delta_{N}$ acting on $L^2(\mathbb R^n)$.
 Let ${\rm BMO}_{\Delta_{N}}(\RR)$ denote the BMO
space on $\RR$ associated to the Neumann operator $\L$.
In this article we will
show that a function  $f\in {\rm BMO}_{\Delta_{N}}(\RR)$ is the trace of the solution of
${\mathbb L}u=u_{t}+\L u=0,  u(x,0)= f(x),$
 where $u$ satisfies  a Carleson-type condition
\begin{eqnarray*}
 \sup_{x_B, r_B} r_B^{-n}\int_0^{r_B^2}\int_{B(x_B, r_B)}  |\nabla   u(x,t)|^2 {dx dt }  \leq C <\infty,
\end{eqnarray*}
for some constant $C>0$.
Conversely, this Carleson   condition characterizes  all the ${\mathbb L}$-carolic functions whose traces belong to
the space ${\rm BMO}_{\Delta_{N}}(\RR)$.
This result extends the  analogous characterization founded by E. Fabes and U. Neri in  (\textit{Duke Math. J.}
\textbf{42} (1975),  725-734) for  the classical BMO space  of John and Nirenberg. Furthermore, based on the characterization of ${\rm BMO}_{\Delta_{N}}(\RR)$ space mentioned above, we prove global well-posedness for
parabolic equations of Navier-Stokes type with the Neumann boundary condition under smallness condition on intial data $u_{0}\in {{\rm BMO}_{\Delta_{N}}^{-1}(\RR)}$, which is motivated by the work of
P. Auscher and D. Frey  (\textit{J. Inst. Math. Jussieu}
\textbf{16(5)} (2017),  947-985).
\end{abstract}

\maketitle


\section{Introduction and statement of the main result}
\setcounter{equation}{0}

In Harmonic Analysis, to study a (suitable) function $f(x)$ on $\mathbb R^n$ is to consider a harmonic function
on $\mathbb R^{n+1}_{+}$ which has the boundary value as $f(x)$. A standard choice for such a harmonic
function is the Poisson
integral $e^{-t\sqrt{-\Delta}} f(x)$ and one recovers $f(x)$ when letting $t \rightarrow 0^{+}$, where $\Delta=\sum_{i=1}^n\partial_{x_i}^2$ is the Laplace operator.
 In other words,
one obtains $u(x,t) = e^{-t\sqrt{-\Delta}} f(x)$ as the solution of the equation
\begin{equation*}
\left\{
\begin{aligned}
\partial_{tt}u +\Delta u=0&,\ \ \ \  \ \ \ \  x\in \RR, \ t>0, \\
u(x,0)= f(x)&, \ \ \ \ \ \ \ \ x\in \RR.\\
\end{aligned}
\right.
\end{equation*}
This approach is intimately related to
the study of singular integrals. In \cite{SW}, the authors studied the classical case $f \in L^p(\mathbb R^n)$, $1 \le p \le \infty$.

It is well known that the BMO space, i.e. the space
of functions of bounded mean oscillation, is natural substitution to study singular integral at the end-point space $L^{ \infty}(\RR)$.
A celebrated   theorem of  Fefferman and Stein \cite{FS}   states    that
 a BMO function   is the trace of the solution of
 $\partial_{tt}u +\Delta u=0,  u(x,0)= f(x),$
 whenever $u$ satisfies
\begin{eqnarray}\label{ee1.1}
 \sup_{x_B, r_B} r_B^{-n}\int_0^{r_B}\int_{B(x_B, r_B)}|t\nabla  u(x,t)|^2 {dx dt\over t }<\infty,
\end{eqnarray}
where $\nabla=(\nabla_x, \partial_t)=(\partial_{1},...,\partial_{n}, \partial_t).$  Conversely,
   Fabes, Johnson and Neri \cite{FJN} showed that  condition  above
   characterizes  all the harmonic functions whose traces are in ${\rm BMO}(\RR)$ in 1976.
  The study of this topic has been widely
extended to more general operators such as elliptic operators and Schr\"odinger operators (instead of the Laplacian),  for more general initial data spaces such as Morrey spaces
and for domains other than $\mathbb R^n$ such as Lipschitz domains. For these generalizations,   see \cite{DKP, DYZ, FN1, FN,Song}.

In \cite{FN1}, Fabes and Neri further generalized  the above characterization to caloric functions (temperature), that is the authors proved that a BMO function $f$ is the trace of the solution of

\begin{equation*}
\left\{
\begin{aligned}
\partial_{t}u-\Delta u=0&,\ \ \ \  \ \ \ \  x\in \RR, \ t>0, \\
u(x,0)= f(x)&, \ \ \ \ \ \ \ \ x\in \RR,\\
\end{aligned}
\right.
\end{equation*}
 whenever $u$ satisfies
\begin{eqnarray} \label{e1.1}
 \sup_{x_B, r_B} r_B^{-n}\int_0^{r_B^2}\int_{B(x_B, r_B)}  |\nabla  u(x,t)|^2 {dx dt }  <\infty,
\end{eqnarray}
where $\nabla=(\nabla_x, \partial_t);$  and, conversely,  the  condition \eqref{e1.1}
   characterizes  all the carolic functions whose traces are in ${\rm BMO}(\RR)$.
The authors in \cite{JX} made a complete conclusion, related to harmonic functions and carolic functions, about this subject.

 We denote by $\Delta_n$ the Laplacian on $\mathbb{R}^{n}$. Next we recall the Neumann Laplacian on $\mathbb{R}^{n}_{+}$
and $\mathbb{R}^{n}_{-}$.
 Consider the Neumann problem on the half line $(0, \infty)$:
\begin{eqnarray}\label{AAA1.1}
\left\{
\begin{array}{ll}
 u_t-\Delta u=0,  \ \ \
 \ \ \   &{\rm }\ 0<x<\infty,\, 0<t<\infty, \\[6pt]
 u(x,0)=f(x),  \ \ \
 \ \ \   &{\rm }\ 0<x<\infty, \\[6pt]
 u_{x}(0,t)=0,  \ \ \
 \ \ \   &{\rm }\ 0<t<\infty. \\[6pt]
\end{array}
\right.
\end{eqnarray}
Denote this corresponding Laplacian by $\Delta_{1, N_{+}}$ and we
see that $$u(x, t) =e^{t\Delta_{1,N_{+}}}(f)(x).$$
For $n > 1$, we
write $\mathbb{R}^{n}_{+}= \mathbb{R}^{n-1}\times \mathbb{R}_{+}$. And we definition the Neumann
Laplacian on $\mathbb{R}^{n}_{+}$ by
$\Delta_{n, N_{+}}=\Delta_{n-1}+\Delta_{1, N_{+}}$;
where $\Delta_{n-1}$ is the Laplacian on $\mathbb{R}^{n-1}$ and $\Delta_{1, N_{+}}$ is the
Laplacian corresponding to \eqref{AAA1.1}, for more results related to this topic, we refer the readers to see \cite{DDSY, LW}. Similarly we can definition Neumann Laplacian $\Delta_{n, N_{-}}$ on $\mathbb{R}^{n}$. Now let $\Delta_{N}$ be the uniquely determined unbounded operator acting on $L^{2}(\mathbb{R}^{n})$ such
that
$({\Delta_{N}}f)_{+}={ \Delta_{N_{+}}}f_{+}$ and $({\Delta_{N}}f)_{-}={\Delta_{N_{-}}}f_{-}$ for all $f: \mathbb{R}^{n}\rightarrow\mathbb{R}$ such that $f_{+}\in W^{1,2}(\mathbb{R}_{+}^{n})$ and $f_{-}\in W^{1,2}(\mathbb{R}_{-}^{n})$. Then $\Delta_{N}$ is a positive self-adjoint operator and $$(\exp(t\Delta_{N})f)_{+}=\exp(t\Delta_{N_{+}})f_{+};\ \ \ \  \ (\exp(t\Delta_{N})f)_{-}=\exp(t\Delta_{N_{-}})f_{-}.$$
The operator   $\Delta_{N}$ is a self-adjoint
operator on $L^2(\RR)$. Hence $\Delta_{N}$ generates the $\Delta_{N}$-heat semigroup $$\T_tf(x)=e^{t\Delta_{N}}f(x)=\int_{\Real^n}p_{t,\Delta_{N}}(x,y)f(y)dy,\ f\in L^2(\RR),\ t>0.$$
 The heat kernel of $\rm \exp(t\Delta_{N})$, denoted by $p_{t, \Delta_{N}}(x, y)$, is
 then given as:
 $$p_{t, \Delta_{N}}(x, y)=\frac{1}{(4\pi t)^{\frac{n}{2}}}e^{-\frac{|x'-y'|^{2}}{4t}}(
 e^{-\frac{|x_{n}-y_{n}|^{2}}{4t}}-e^{-\frac{|x_{n}+y_{n}|^{2}}
 {4t}})H(x_{n}y_{n}),$$
 where $H: R\rightarrow\{0,1\}$ is the Heaviside given by
 $$ H(t)=0, \,  t<0; \ \ \ \ \, H(t)=1, \, t\geq0,$$
 $x'=(x_{1}, x_{1}, ... , x_{n-1})$ and $y'=(y_{1},y_{1}, ... , y_{n-1})$.
It is well-known that  the semigroup kernels $p_{t, \Delta_{N}}(x, y)$  of the operators $e^{t\Delta_{N}}$ satisfies Gaussian bounds:
\begin{eqnarray*}
0\leq p_{t, \Delta_{N}}(x, y)\leq h_t(x-y)
\end{eqnarray*}
for all $x,y\in\RR$ and $t>0$, where
$$
h_t(x)=(4\pi t)^{-\frac{n}{2}}e^{-\frac{|x|^2}{4t}}
$$
is the kernel of the classical heat semigroup
$\set{{T}_t}_{t>0}=\{e^{t\Delta}\}_{t>0}$ on $\Real^n$. For the classical heat semigroup associated with Laplacian, see \cite{St1970}.
For $f\in  L^p(\RR)$, $1\leq p<  \infty,$
it is well known that $u(x,t)=e^{t\Delta_{N}}f(x), t>0, x\in\RR$, is a solution to the equation
\begin{eqnarray}\label{el.4}
{\mathbb L}u=\partial_{t}u-\Delta_{N}u =0\ \ \ {\rm in }\ {\mathbb R}^{n+1}_+
\end{eqnarray}
with the boundary data $f\in  L^p(\RR)$, $1\leq p<  \infty.$
 The equation  ${\mathbb L}u =0$ is interpreted in the weak sense via a sesquilinear form, that is, \ \   $u\in {W}^{1, 2}_{{\rm loc}} ( {\mathbb R}^{n}_\aleph \times\mathbb{R}^{+}) $ is a weak solution of ${\mathbb L}u =0$   if it satisfies
$$\int_{{\mathbb R}^{n}_\aleph \times\mathbb{R}^{+}}
{\nabla_x}u(x,t)\cdot {\nabla_x}\psi(x,t)\,dxdt-\int_{{\mathbb R}^{n}_\aleph \times\mathbb{R}^{+}} u(x,t) \partial_t\psi(x,t)dxdt=0,\ \ \ \
 $$
for any $ \psi(x,t)\in C_0^{1}( {\mathbb R}^{n}_\aleph \times\mathbb{R}^{+})$ and $\aleph\in \{+,\ \ -\}$. In the sequel,   we call such a function $u$  an ${\mathbb L}$-carolic function associated to the operator ${\mathbb L}$.

The first main aim of this article is to study a similar characterization to \eqref{e1.1}  for the Neumann operator $\Delta_{N}$.
In a word,  we  are interested in  deriving the characterization of the solution
 to the equation $ {\mathbb L}u  =0
 $ in ${\mathbb R}^{n+1}_+$ having boundary values with BMO type data ($\rm BMO_{\Delta_{N}}({\mathbb R}^n)$). We now recall the definition and some fundamental properties of $\rm BMO_{\Delta_{N}}({\mathbb R}^n)$ from \cite{ DDSY, LW}.

 Define
 $$M=\left\{f \in L_{loc}^{1}(\mathbb{R}^{n}): \exists\, d>0,\, s.t. \int_{\mathbb{R}^{n}}\frac{|f(x)|^{2}}{1+|x|^{n+d}}dx<\infty\right\}.$$

 \begin{defn}[\cite{DDSY}, Definition 2.2]
We say that $f\in M$ is of bounded mean oscillation
associated with $\Delta_{N}$, abbreviated as $\rm BMO_{\Delta_{N}}(\mathbb{R}^{n})$, if
\begin{equation}\label{A1.8}
\left\|f\right\|_{\rm BMO_{\Delta_{N}}(\mathbb{R}^{n})}=\sup_{B(y,r)}\frac{1}{\left|B(y,r)\right|} \int_{B(y,r)}\left|f(x)-\exp(-r^{2}\Delta_{N})f(x)\right|dx<\infty,
\end{equation}
where the supremum is taken over all balls $B(y, r)$ in $\mathbb{R}^{n}$. The smallest bound for which
\eqref{A1.8} is satisfied is then taken to be the norm of $f$ in this space, and is denoted by $\|f\|_{\rm BMO_{\Delta_{N}}(\mathbb{R}^{n})}$.
\end{defn}

Let us introduce a new  function class on  half plane $\mathbb{R}_+^{n+1}$ or $\mathbb{R}_-^{n+1}$.

\begin{defn}[Temperature Mean Oscillation for $\Delta_{\rm N}$]
A $C^1$-functions $u(x,t)$ defined on $\Real_+^{n}\times(0,\infty)$ belongs to the class  ${\rm TMO_{\Delta_{N_+}}}(\Real_+^{n}\times (0,\infty))$, if  $u(x,t)$ is
the solution of
\begin{eqnarray}\label{B1.1}
\left\{
\begin{array}{ll}
 \partial_tu-\Delta u=0,  \ \ \
 \ \ \   &{\rm }\ x\in \mathbb{R}_+^{n},\, 0<t<\infty, \\
 \partial_{x_n}u(x',0,t)=0, &{\rm }\ x'\in \mathbb{R}^{n-1},\, 0<x_n,\ t<\infty,\\
\end{array}
\right.
\end{eqnarray}
 such that
\begin{eqnarray}\label{e1.8}
\|u\|^2_{{\rm TMO_{\Delta_{N_+}}}(\Real_+^{n}\times(0,\infty))}&=& \sup_{B(x_B, r_B)\subset \mathbb{R}_+^n\times (0,\infty)}     r_B^{-n} \int_0^{r_B^2}\int_{B(x_B, r_B)}  | \nabla u(x,t) |^2
{dx dt }  <\infty,
\end{eqnarray}
where $\nabla=(\nabla_x, \partial_t).$
Similarly, we can define ${\rm TMO_{\Delta_{N_-}}}(\Real_-^{n}\times (0,\infty))$, the temperature mean oscillation for $\Delta_{\rm N_-}$.

{We say that $u(x,t) \in {\rm TMO_{\Delta_{N}}}(\Real^{n}\times (0,\infty))$, if   $u_+(x,t)=u(x,t)\chi_{\mathbb R_+^n\times (0,\infty)}(x,t)\in {\rm TMO_{\Delta_{N_+}}}(\Real_+^{n}\times (0,\infty))$
 and $u_-(x,t)=u(x,t)\chi_{\mathbb R_-^n\times (0,\infty)}(x,t)\in{\rm TMO_{\Delta_{N_-}}}(\Real_-^{n}\times (0,\infty))$ with
\begin{eqnarray*}\label{e1.8}
\|u\|_{{\rm TMO_{\Delta_{N}}}(\Real_+^{n+1})}&=&\max\Big\{ \|u_+\|_{{\rm TMO_{\Delta_{N_+}}}(\Real_+^{n}\times(0,\infty))},\ \|u_-\|_{{\rm TMO_{\Delta_{N_-}}}(\Real_-^{n}\times(0,\infty))}\Big\}<\infty.
\end{eqnarray*}}
\end{defn}

\begin{thm}\label{th1.1}
 \begin{itemize}\item[(1)]
If $f\in {\rm BMO_{\Delta_{N}}}(\RR)$, then  the function $u(x,t)=e^{t \Delta_{N}}f(x)\in {\rm TMO_{\Delta_{N}}}(\Real_+^{n+1})$ with
 $$ \|u\|_{{\rm TMO_{\Delta_{N}}}(\Real_+^{n+1})}\approx \|f\|_{{\rm BMO_{\Delta_{N}}}(\RR)}.$$
\item[(2)]    If $u\in {\rm TMO_{\Delta_{N}}}(\Real_+^{n+1})$, then    there exists some $f\in {\rm BMO_{\Delta_{N}}}(\Real^{n})$ such that $u(x,t)=e^{t \Delta_{N}}f(x)$,
and
$$
\|f\|_{{\rm BMO_{\Delta_{N}}}(\RR)}\leq C\|u\|_{{\rm TMO_{\Delta_{N}}}(\Real_+^{n+1})}
$$ with some constant $C>0$ independent of $u$ and $f$.

\item[(3)] We say that a tempered
distribution $f\in \rm BMO^{-1}_{\Delta_{N}}$
 if $$ \sup_{B(x_B, r_B)\subset \mathbb{R}^n\times (0,\infty)}     r_B^{-n} \int_0^{r_B^2}\int_{B(x_B, r_B)}  \left| e^{t \Delta_{N}}f(x) \right|^2
{dx dt }  <\infty.$$ According the definition above, it follows that a tempered distribution $f \in\mathbb{R}^{n}$ belongs to $\nabla\cdot (\rm BMO_{\Delta_{N}}(\mathbb{R}^{n}))^{n} $ if and only if there are $f_{j }\in \rm BMO_{\Delta_{N}}(\mathbb{R}^{n})$
 such that $f=\sum_{j=1}^{j=n}\partial_{j}f_{j}$. That is,
$$ \nabla\cdot (\rm BMO_{\Delta_{N}}(\mathbb{R}^{n}))^{n}=
 \rm BMO^{-1}_{\Delta_{N}}(\mathbb{R}^{n}).$$
 \end{itemize}
\end{thm}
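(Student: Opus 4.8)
The plan is to reduce all three statements to the classical caloric Fabes--Neri theorem \cite{FN1} by exploiting the block structure $\Delta_N=\Delta_{N_+}\oplus\Delta_{N_-}$ (for the splitting $\RR=\mathbb{R}^n_+\sqcup\mathbb{R}^n_-$) together with even reflection across $\{x_n=0\}$. Recall that on $\mathbb{R}^n_+$ the Neumann heat kernel is $p_{t,\Delta_{N_+}}(x,y)=h_t(x-y)+h_t(x-\bar y)$ with $\bar y=(y',-y_n)$, so that for $g$ on $\mathbb{R}^n_+$ with even extension $\tilde g(x',x_n):=g(x',|x_n|)$ one has $e^{t\Delta_{N_+}}g=(e^{t\Delta}\tilde g)|_{\mathbb{R}^n_+}$; in particular $u(x,t)=e^{t\Delta_{N_+}}g(x)$ is the restriction to $\mathbb{R}^n_+\times(0,\infty)$ of the smooth caloric function $\tilde u:=e^{t\Delta}\tilde g$, which is $x_n$-even (so $\partial_{x_n}u(x',0,t)=0$ automatically). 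I shall also use the known fact (see \cite{DDSY,LW}) that $g\mapsto\tilde g$ identifies ${\rm BMO}_{\Delta_{N_+}}(\mathbb{R}^n_+)$ with the $x_n$-even subspace of the classical ${\rm BMO}(\RR)$, with comparable norms, and similarly for $\Delta_{N_-}$; thus both ${\rm BMO}_{\Delta_N}(\RR)$ and ${\rm TMO}_{\Delta_N}(\Real_+^{n+1})$ decompose into their $\mathbb{R}^n_\pm$-parts via a maximum.

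\emph{Part (1).} It suffices to treat one half-space, say $u_+=e^{t\Delta_{N_+}}f_+$ on $\mathbb{R}^n_+$. For a ball $B=B(x_B,r_B)$ we have $\nabla u_+=\nabla\tilde u$ on $B\times(0,r_B^2)$, so applying the two directions of \cite{FN1} to the genuine caloric function $\tilde u=e^{t\Delta}\tilde f_+$ gives $r_B^{-n}\int_0^{r_B^2}\int_{B}|\nabla u_+|^2\,dxdt\approx\|\tilde f_+\|_{{\rm BMO}(\RR)}^2\approx\|f_+\|_{{\rm BMO}_{\Delta_{N_+}}}^2$; taking suprema and the maximum over $\mathbb{R}^n_\pm$ yields $\|u\|_{{\rm TMO}_{\Delta_N}}\approx\|f\|_{{\rm BMO}_{\Delta_N}}$. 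An argument intrinsic to $\Delta_N$ (avoiding \cite{FN1}) is also available: split $f_+=(f_+-e^{-r_B^2\Delta_{N_+}}f_+)\chi_{4B}+(f_+-e^{-r_B^2\Delta_{N_+}}f_+)\chi_{(4B)^c}+e^{-r_B^2\Delta_{N_+}}f_+$, estimate the local term through the spectral identity $\int_0^\infty\|\nabla e^{s\Delta_{N_+}}h\|_2^2\,ds\le C\|h\|_2^2$ and a John--Nirenberg inequality for ${\rm BMO}_{\Delta_{N_+}}$, estimate the tail through the Gaussian bound $|\nabla_x p_{t,\Delta_{N_+}}(x,y)|\le Ct^{-(n+1)/2}e^{-c|x-y|^2/t}$ summed over dyadic annuli $2^kB$ (using $\int_{2^kB}|f_+-e^{-r_B^2\Delta_{N_+}}f_+|\le C(1+k)(2^kr_B)^n\|f_+\|_{{\rm BMO}_{\Delta_{N_+}}}$), and note that the regularised piece $e^{(t+r_B^2)\Delta_{N_+}}f_+$ is covered by the same tail estimate because $t+r_B^2\ge r_B^2$.

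\emph{Part (2).} Let $u\in{\rm TMO}_{\Delta_N}(\Real_+^{n+1})$ and reflect $u_+$ evenly in $x_n$ to $\tilde u$ on $\RR\times(0,\infty)$. Since $u_+$ is a $C^1$ weak solution of $\partial_t u-\Delta u=0$ on $\mathbb{R}^n_+\times(0,\infty)$ with $\partial_{x_n}u_+(x',0,t)=0$, integration by parts against test functions shows that the interface contributions of $\tilde u$ across $\{x_n=0\}$ cancel, so $\tilde u$ is a global weak---hence, by hypoellipticity of the heat operator, smooth---caloric function on $\RR\times(0,\infty)$; moreover every classical Carleson box splits along $\{x_n=0\}$ into pieces controlled by half-space boxes for $u_\pm$, so $\tilde u$ satisfies the classical Carleson condition \eqref{e1.1} with constant $\le C\|u\|_{{\rm TMO}_{\Delta_N}}$. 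The hard direction of \cite{FN1} then produces $\tilde f\in{\rm BMO}(\RR)$ with $\tilde u=e^{t\Delta}\tilde f$ and $\|\tilde f\|_{{\rm BMO}}\le C\|u\|_{{\rm TMO}_{\Delta_N}}$; letting $t\to0^+$ shows $\tilde f$ is $x_n$-even, hence $\tilde f$ is the even extension of $f_+:=\tilde f|_{\mathbb{R}^n_+}$, $u_+=e^{t\Delta_{N_+}}f_+$, and $\|f_+\|_{{\rm BMO}_{\Delta_{N_+}}}\le C\|u\|_{{\rm TMO}_{\Delta_N}}$. Doing the same on $\mathbb{R}^n_-$ and gluing produces $f\in{\rm BMO}_{\Delta_N}(\RR)$ with $u=e^{t\Delta_N}f$ and the stated bound. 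The genuinely delicate point here is checking that the even reflection is a \emph{global} caloric function and that the half-space and classical Carleson conditions are comparable (including the behaviour near $\{x_n=0\}$); the rest is \cite{FN1} plus bookkeeping.

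\emph{Part (3).} This is the $\Delta_N$-analogue of the identity ${\rm BMO}^{-1}=\nabla\cdot({\rm BMO})^n$ (Koch--Tataru), and I would combine the reformulation furnished by Parts (1)--(2)---namely $g\in{\rm BMO}_{\Delta_N}(\RR)$ iff $\sup_B r_B^{-n}\int_0^{r_B^2}\int_{B}|\nabla e^{t\Delta_N}g|^2\,dxdt<\infty$---with $L^2$-boundedness and Calder\'on--Zygmund kernel bounds for the second-order Neumann Riesz transforms $\partial_i\partial_j(-\Delta_N)^{-1}$. For the inclusion $\nabla\cdot({\rm BMO}_{\Delta_N})^n\subseteq{\rm BMO}^{-1}_{\Delta_N}$: if $f=\sum_j\partial_j g_j$ with $g_j\in{\rm BMO}_{\Delta_N}$, then, the interface terms being absorbed via the reflection (even/odd) structure underlying the Neumann condition, $e^{t\Delta_N}f=\sum_j\partial_{x_j}e^{t\Delta_N}g_j$, so $|e^{t\Delta_N}f|\le\sum_j|\nabla e^{t\Delta_N}g_j|$ and the defining Carleson quantity of ${\rm BMO}^{-1}_{\Delta_N}$ is controlled by $C\sum_j\|g_j\|_{{\rm BMO}_{\Delta_N}}^2$ via Part (1). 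For the reverse inclusion: given $f\in{\rm BMO}^{-1}_{\Delta_N}$, set $g_j:=-\partial_j(-\Delta_N)^{-1}f$ (defined, as in Koch--Tataru type arguments, through the heat flow so that $f=\sum_j\partial_j g_j$); then $\nabla e^{t\Delta_N}g_j$ consists of the operators $\partial_i\partial_j(-\Delta_N)^{-1}$ applied to $e^{t\Delta_N}f$, and one checks, by the standard near/far decomposition ($L^2$-boundedness over $4B$, kernel decay plus the Carleson bound on the dilates $2^kB$ for the tail), that such Calder\'on--Zygmund operators send ${\rm BMO}^{-1}_{\Delta_N}$-Carleson data to ${\rm TMO}_{\Delta_N}$-Carleson data, whence $g_j\in{\rm BMO}_{\Delta_N}$ by Parts (1)--(2). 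I expect the principal obstacles to be (i) establishing the $L^2$-boundedness and Calder\'on--Zygmund kernel estimates for $\partial_i\partial_j(-\Delta_N)^{-1}$, which I would again derive by even/odd reflection from the classical Riesz transforms, and (ii) keeping track of the boundary/interface terms carefully enough to confirm they cancel (for vector fields with the parities induced by the Neumann condition) rather than merely being lower order.
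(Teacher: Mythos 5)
Your proposal follows essentially the same route as the paper: parts (1) and (2) are handled exactly as in the text, by splitting into $\mathbb{R}^n_{\pm}$, reflecting evenly across $\{x_n=0\}$, and invoking the two directions of the classical Fabes--Neri theorem, while part (3) likewise reduces matters to the classical identity ${\rm BMO}^{-1}=\nabla\cdot({\rm BMO})^n$ via second-order Riesz transforms combined with the even-extension equivalence of norms (the paper's \eqref{e2.2}). If anything, you are more explicit than the paper about the delicate points it glosses over (caloricity of the reflected solution, comparability of the half-space and global Carleson conditions, and the interface terms arising when commuting $\partial_{x_n}$ with $e^{t\Delta_{N}}$), so the proposal is consistent with, and no weaker than, the paper's own argument.
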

As an application of Theorem  \ref{th1.1}, we shall consider the well-posedness of parabolic equations of Navier-Stokes type with the Neumann boundary condition, which borrows from \cite{AF1} completely. That is, for a.e. $x\in\mathbb{R}^{n}$, consider the equation
\begin{equation}\label{ee1.8}
\left\{
\begin{aligned}
\partial_{t}u-\Delta u=div_{x}f(u^{2}(t,x))&,\ \ \ \  \ \ \ \  x\in \Omega, \ t>0, \\
u(x,0)= u_{0}(x)&, \ \ \ \ \ \ \ \ x\in \Omega,\\
\frac{\partial u(x,t)}{\partial x_{n}}= 0&, \ \ \ \ \ \ \ \ x_{n}=0,\\
\end{aligned}
\right.
\end{equation}
where $\Omega=\mathbb{R}^{n}_{+}\cup \mathbb{R}^{n}_{-}$, we assume that $f: \mathbb{R}\rightarrow \mathbb{R}^{n}$
is globally Lipschitz continuous, and satisfies $$|f(x)|\leq C|x|,\, x\in\mathbb{R}.$$
Without loss of generality, we assume that $f(x)=x$ in this paper. The mild solutions of the system \eqref{ee1.8} is
\begin{align}\label{Aee1.8}
u(., t ) =e^{t\Delta_{N}}u_{0}-\int_{0}^{t}e^{(t-s)\Delta_{N}}div_{x} (u^{2}(s, . )) ds.
\end{align}
Clearly, from Theorem  \ref{th1.1}, we know that the divergence of a vector field with components in $\rm BMO_{\Delta_{N}}(\mathbb{R}^{n})$ is in  $\rm BMO^{-1}_{\Delta_{N}}(\mathbb{R}^{n})$.
To make sense to the free evolution term $e^{t\Delta_{N}}u_{0}$ from \eqref{Aee1.8}, recall that
in the case of the system \eqref{Aee1.8} (with the Neumann Laplacian in the background), the adapted
value space consists of divergence free elements $u_{0}$ in $\rm BMO^{-1}_{\Delta_{N}}(\mathbb{R}^{n})$ and is characterized by
$e^{t\Delta_{N}}u_{0}$ in the path space. Thus, we let
$$\varepsilon:=\left\{ u(x,t)\ \, measurables\, in \,\ \ \mathbb{R}^{n}\times (0, \infty) :
\left\|u\right\|_{\varepsilon}<\infty\right\},$$
with

\begin{eqnarray}\label{eee1.9}
\left\|u\right\|_{\varepsilon}= \left\|t^{1/2}u\right\|_{L^{\infty}(\mathbb{R}^{n}\times (0, \infty))}+
\left\|u\right\|_{T^{\infty,2}(\mathbb{R}^{n+1}_{+})},
\end{eqnarray}
where
$$\left\|u\right\|_{T^{\infty,2}(\mathbb{R}^{n+1}_{+})}=\sup_{x\in \mathbb{R}^{n}, t\in(0, \infty)}\left( t^{-n/2} \int_0^{t}\int_{B(x, \sqrt{t})}  \left|u(y,s)\right|^2
{dy ds }\right)^{1/2}.$$
Our second result in this paper reads as follows.

 \begin{thm}\label{th1.2}
Let $u_{0}\in {\rm BMO_{\Delta_{N}}^{-1}}(\RR)$. There exists
 $\epsilon>0$, such that, if $\|u_{0}\|_{\rm BMO_{\Delta_{N}}^{-1}}<\epsilon$, then the system \eqref{ee1.8} has a global mild solution $u\in\varepsilon$, which is unique one in the closed all
 $\{u\in\varepsilon: \|u\|_{\varepsilon}\leq2\epsilon\}$.
\end{thm}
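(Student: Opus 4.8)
The plan is to solve the integral equation \eqref{Aee1.8} by a Picard fixed-point argument in the closed ball $\mathcal B_{2\epsilon}=\{u\in\varepsilon:\|u\|_\varepsilon\le 2\epsilon\}$, following Auscher and Frey \cite{AF1}. Write \eqref{Aee1.8} as $u=\Phi(u):=e^{t\Delta_N}u_0+B(u,u)$, where the bilinear map is $B(u,v)(\cdot,t):=-\int_0^t e^{(t-s)\Delta_N}\,{\rm div}_x\big(u(s,\cdot)v(s,\cdot)\big)\,ds$. By the Banach fixed-point lemma the theorem reduces to two estimates: (i) $\|e^{t\Delta_N}u_0\|_\varepsilon\le C_0\|u_0\|_{{\rm BMO}^{-1}_{\Delta_N}}$ and (ii) $\|B(u,v)\|_\varepsilon\le C_1\|u\|_\varepsilon\|v\|_\varepsilon$, with $C_0,C_1$ absolute. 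Indeed, granted (i)--(ii), if $\epsilon$ is small enough that $4C_1\epsilon<1$ and $\|u_0\|_{{\rm BMO}^{-1}_{\Delta_N}}<\epsilon/(2C_0)$, then $\Phi$ maps $\mathcal B_{2\epsilon}$ into itself and, since $\Phi(u)-\Phi(v)=B(u-v,u)+B(v,u-v)$, satisfies $\|\Phi(u)-\Phi(v)\|_\varepsilon\le C_1(\|u\|_\varepsilon+\|v\|_\varepsilon)\|u-v\|_\varepsilon\le 4C_1\epsilon\|u-v\|_\varepsilon<\|u-v\|_\varepsilon$ on $\mathcal B_{2\epsilon}$; hence $\Phi$ has a unique fixed point in $\mathcal B_{2\epsilon}$, which is the desired global mild solution, with uniqueness in the closed ball part of the fixed-point statement.

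Both estimates rest on the explicit kernel $p_{t,\Delta_N}$ recalled in the Introduction. Being a reflection of the Gaussian kernel across $\{x_n=0\}$, it satisfies, besides $0\le p_{t,\Delta_N}(x,y)\le h_{ct}(x-y)$, the gradient bound $|\nabla_x p_{t,\Delta_N}(x,y)|\le C t^{-1/2}h_{ct}(x-y)$; and $e^{t\Delta_N}$ is a positivity-preserving, conservative analytic semigroup on each $L^p(\mathbb R^n)$, $1\le p\le\infty$, so that $\|e^{\tau\Delta_N}h\|_q\lesssim\tau^{-\frac n2(\frac1p-\frac1q)}\|h\|_p$, $\|\nabla_x e^{\tau\Delta_N}h\|_q\lesssim\tau^{-\frac12-\frac n2(\frac1p-\frac1q)}\|h\|_p$, and $\int_{\mathbb R^n}\nabla_x p_{t,\Delta_N}(x,y)\,dy=0$ (from $e^{t\Delta_N}1=1$). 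Interpreting ${\rm div}_x$ via the quadratic form $\int\nabla_x u\cdot\nabla_x v$ associated with $\Delta_N$ — so the Neumann condition is encoded in the form domain and no boundary terms arise — the operator $e^{\tau\Delta_N}{\rm div}_x$ has a kernel $K_\tau(x,y)$ with $|K_\tau(x,y)|\le C\tau^{-1/2}h_{c\tau}(x-y)$, and $\sqrt\tau\,e^{\tau\Delta_N}{\rm div}_x$ obeys $L^2$ off-diagonal (Gaffney--Davies) bounds; all of this is immediate from the reflection structure of $p_{t,\Delta_N}$ together with the classical heat kernel estimates.

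For (i): by Theorem \ref{th1.1}(3), $u_0=\sum_{j=1}^n\partial_j g_j$ with $g_j\in{\rm BMO}_{\Delta_N}(\mathbb R^n)$, and the $T^{\infty,2}$-component of $\|e^{t\Delta_N}u_0\|_\varepsilon$ equals, up to a square root, the Carleson supremum defining $\|u_0\|_{{\rm BMO}^{-1}_{\Delta_N}}$, so that part is controlled directly. For the $\|t^{1/2}u\|_{L^\infty}$-component, write $e^{t\Delta_N}u_0(x)=\sum_j\int\partial_{x_j}p_{t,\Delta_N}(x,y)g_j(y)\,dy$, subtract the average of $g_j$ over $B(x,\sqrt t)$ using $\int\partial_{x_j}p_{t,\Delta_N}(x,y)\,dy=0$, split $\mathbb R^n$ into the dyadic annuli around $x$ of scale $\sqrt t$, and combine the Gaussian gradient bound with the ${\rm BMO}_{\Delta_N}$-estimate of $g_j$ (using the equivalence from \cite{DDSY,LW} of the $\Delta_N$-oscillation with the usual mean oscillation) to obtain $\|t^{1/2}e^{t\Delta_N}u_0\|_{L^\infty}\lesssim\sum_j\|g_j\|_{{\rm BMO}_{\Delta_N}}\lesssim\|u_0\|_{{\rm BMO}^{-1}_{\Delta_N}}$. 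This is the exact $\Delta_N$-analogue of the classical bound $\|t^{1/2}e^{t\Delta}\,{\rm div}\,g\|_{L^\infty}\lesssim\|g\|_{{\rm BMO}}$ of Koch--Tataru type, and the argument transfers without change.

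The main obstacle is (ii). Split $\int_0^t=\int_0^{t/2}+\int_{t/2}^t$ in $B(u,v)(x,t)$. On $(t/2,t)$ use $\|u(s)v(s)\|_{L^\infty}\lesssim s^{-1}\|u\|_\varepsilon\|v\|_\varepsilon\lesssim t^{-1}\|u\|_\varepsilon\|v\|_\varepsilon$ and $\|K_{t-s}(x,\cdot)\|_{L^1}\lesssim(t-s)^{-1/2}$ to bound that piece of $t^{1/2}|B(u,v)(x,t)|$ by $t^{1/2}t^{-1}\int_{t/2}^t(t-s)^{-1/2}\,ds\,\|u\|_\varepsilon\|v\|_\varepsilon\lesssim\|u\|_\varepsilon\|v\|_\varepsilon$. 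On $(0,t/2)$ the size $\|u(s)v(s)\|_{L^\infty}\sim s^{-1}$ is not integrable, so one must bring in the $T^{\infty,2}$-component: decompose $\mathbb R^n$ into the dyadic annuli $\{|y-x|\sim 2^k\sqrt t\}$, apply Cauchy--Schwarz on each (keeping one $L^\infty$ factor so the remaining $L^2\times L^2$ product is integrable in $s\in(0,t/2)$, and using $t-s\sim t$ so that the Gaussian decay of $K_{t-s}$ dominates the volume factor $2^{kn}$) to bound that piece by $\|u\|_\varepsilon\|v\|_\varepsilon$ as well. The $T^{\infty,2}$-norm of $B(u,v)$ is treated in the same spirit; for it, it is convenient to use the $L^2$ off-diagonal bounds of $\sqrt\tau\,e^{\tau\Delta_N}{\rm div}_x$ to view $g\mapsto\int_0^t e^{(t-s)\Delta_N}{\rm div}_x g(s)\,ds$ as bounded on the tent/$\varepsilon$-type space, exactly as in \cite{AF1}. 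I expect the delicate step to be precisely this $(0,t/2)$ analysis, which requires both components of the $\varepsilon$-norm simultaneously together with the full strength of the Gaussian gradient bound; the Neumann boundary condition itself enters only through the reflection structure behind those kernel estimates and through the form interpretation of ${\rm div}_x$. Assembling the four pieces gives (ii), and together with the linear estimate and the reduction above this proves the theorem.
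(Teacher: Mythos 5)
Your overall scheme is the same as the paper's: Picard iteration in the ball $\{u\in\varepsilon:\|u\|_{\varepsilon}\le 2\epsilon\}$, a linear estimate $\|e^{t\Delta_N}u_0\|_{\varepsilon}\lesssim\|u_0\|_{{\rm BMO}^{-1}_{\Delta_N}}$, and a bilinear estimate for the Duhamel term, with everything transferred to the Euclidean Koch--Tataru/Auscher--Frey setting through the reflection structure of $p_{t,\Delta_N}$ (the paper does this transfer systematically via the even-extension equivalences of Proposition \ref{le2.33}). Two differences of route are worth noting. For $\|t^{1/2}e^{t\Delta_N}u_0\|_{L^{\infty}}$ you argue directly with the cancellation $\int\partial_{x_j}p_{t,\Delta_N}(x,y)\,dy=0$ and dyadic annuli, a Koch--Tataru computation, whereas the paper uses the embedding ${\rm BMO}^{-1}\hookrightarrow\dot{B}^{-1}_{\infty,\infty}$ applied to the even extensions (Propositions \ref{le2.33} and \ref{le3332.2}); both are legitimate. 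Your $L^{\infty}$ bound for the Duhamel term by splitting at $t/2$ matches Proposition \ref{leq2.2}, but the parenthetical in your $(0,t/2)$ analysis is off: if you first extract $\|s^{1/2}u\|_{L^{\infty}}$ you are left with $s^{-1/2}|v(s,y)|$, and Cauchy--Schwarz in $(y,s)$ then produces the divergent factor $\bigl(\int_0^{t/2}s^{-1}\,ds\bigr)^{1/2}$; the correct move is to put \emph{both} $u$ and $v$ in the Carleson $L^2$ over the annulus boxes (equivalently, bound this piece by $\|\alpha\|_{T^{\infty,1}}$ as the paper does and then use $\|uv\|_{T^{\infty,1}}\le\|u\|_{T^{\infty,2}}\|v\|_{T^{\infty,2}}$). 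This slip is fixable.

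The genuine gap is the $T^{\infty,2}$ (Carleson) estimate of the Duhamel term, which you dispose of in one sentence by asserting that $g\mapsto\int_0^t e^{(t-s)\Delta_N}{\rm div}_x\,g(s,\cdot)\,ds$ is ``bounded on the tent/$\varepsilon$-type space'' as a consequence of $L^2$ off-diagonal bounds. As stated this is not a valid reduction: the input $\alpha=uv$ is \emph{not} in $T^{\infty,2}$ --- only $\|\alpha\|_{T^{\infty,1}}$, $\|s^{1/2}\alpha\|_{T^{\infty,2}}$ and $\|s\alpha\|_{L^{\infty}}$ are under control --- and off-diagonal bounds alone do not yield boundedness of the full divergence-Duhamel operator on $T^{\infty,2}$. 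This mismatch is precisely why Auscher--Frey, and the paper following them, decompose $\mathcal{A}=\mathcal{A}_1+\mathcal{A}_2+\mathcal{A}_3$: $\mathcal{A}_1$ is the tent-space maximal regularity operator $\mathcal{M}^{+}$ (Lemma \ref{le2.34}, which requires checking that $\Delta_N$ has Gaussian bounds and a bounded $H^{\infty}$ calculus, hence the needed off-diagonal estimates) composed with the operator $\mathcal{T}=(s\mathcal{L})^{-1}(I-e^{-2s\mathcal{L}}){\rm div}$ acting on $s^{1/2}\alpha$ (Lemma \ref{le3.55}); $\mathcal{A}_2$ is handled by duality against $T^{1,2}$ through the Hardy space $H^{1}_{\Delta_N}$ (Lemma \ref{lee2.4e}); and $\mathcal{A}_3$ by the operator $R$ (Lemma \ref{leee2.e5}); these assemble into Proposition \ref{leA2.2}, which is what closes the contraction. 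To complete your argument you must either reproduce this decomposition for $\Delta_N$ (as the paper does, reducing each piece to the Euclidean case by even reflection) or carry out a genuinely bilinear Koch--Tataru-type Carleson computation; the single boundedness claim you invoke is not available in the form you state it.
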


This article is organized as follows. In Section 2, we give the proof of our   characterization theorem, Theorem \ref{th1.1}.
In Section 4, we give the proof of Theorem~\ref{th1.2} by using Theorem \ref{th1.1} as an application.

Throughout the article, the letters ``$c$ " and ``$C$ " will  denote (possibly different) constants
which are independent of the essential variables.

\vskip 1cm

\section{Proof of characterization theorem}
In this section, we will give the proof of Theorem  \ref{th1.1}.
\begin{proof}[Proof of Theorem \ref{th1.1}]
$(1)$
Since $f\in {\rm BMO}_{\Delta_N}(\RR)$, $f_{+,e}\in {\rm BMO}(\RR)$ and $f_{-,e}\in {\rm BMO}(\RR)$. And we have $$e^{t\Delta_N}f=(e^{t\Delta_N}f)_++(e^{t\Delta_N}f)_-=e^{t\Delta_{N_+}}f_++e^{t\Delta_{N_-}}f_-.$$
Hence, letting  $B=B(x_B, r_B)$,
\begin{align*}
 \int_0^{r_B^2}\int_B \abs{\nabla_{x}e^{t{ \Delta_N}}f(x) }^2{dxdt}
 &\le  2\int_0^{r_B^2}\int_{B} \abs{\nabla_{x}e^{t {\Delta_{N_+}}}f_+(x) }^2{dxdt}+2\int_0^{r_B^2}\int_{B} \abs{\nabla_{x}e^{t {\Delta_{N_-}}}f_-(x) }^2{dxdt}\\
&= 2\int_0^{r_B^2}\int_{B} \abs{\nabla_{x}e^{t {\Delta}}f_{+,e}(x) }^2{dxdt}+2\int_0^{r_B^2}\int_{B} \abs{\nabla_{x}e^{t {\Delta}}f_{-,e}(x) }^2{dxdt}\\
&\le C|B|\norm{f_{+,e}}_{{\rm BMO}(\RR)}^2+C|B|\norm{f_{-,e}}_{{\rm BMO}(\RR)}^2\\
&\le C|B|\norm{f}_{{\rm BMO_{\Delta_N}}(\RR)}^2,
\end{align*}
where we have used the result in \cite{FN1}.

 $(2)$ If $u\in {\rm TMO_{\Delta_{N}}}(\Real_+^{n+1})$, then $u_+\in {\rm TMO_{\Delta_{N_+}}(\Real_+^{n}\times(0,\infty))}$ and $u_-\in {\rm TMO_{\Delta_{N_-}}(\Real_-^{n}\times(0,\infty))}$. Since $u_+\in {\rm TMO_{\Delta_{N_+}}(\Real_+^{n}\times(0,\infty))}$,
 letting $u_{+,e}$ be the even extension of $u_+$ on $\mathbb R^{n+1}_+$. Then, by the result in \cite{FN1}, there exists an even function $f^1\in {\rm BMO}(\RR)$ such that $ u_{+,e}(x,t)=e^{t \Delta}f^1(x)$ for  $\ x\in \RR,$  and $\norm{f^1}_{\rm BMO(\RR)}\le C \| u_{+,e}\|_{{\rm TMO_{\Delta}}(\Real_+^{n+1})}.$ Then $f^1_+(x)\in \rm BMO_r(\mathbb R_+^n)$ and,
 for any   $x\in \RR_+$, $$u(x,t)= u_+(x,t)=u_{+,e}(x,t)=e^{t \Delta}f^1(x)=e^{t \Delta}f^1_{+,e}(x)=e^{t \Delta_{\rm N_+}}f^1_+(x)$$
  with $$\norm{f^1_+}_{{\rm BMO_r}(\mathbb R_+^n)}\le \norm{f^1}_{{\rm BMO}(\mathbb R^n)}\le  C \| u_{+,e}\|_{{\rm TMO_{\Delta}}(\Real_+^{n+1})}\le C \| u\|_{{\rm TMO_{\Delta_{N_+}}}(\Real_+^{n}\times(0,\infty))}$$  and $$\partial_{x_n} e^{t \Delta_{\rm N_+}}f^1_+(x',0)=0.$$

  Also, since $u_-\in {\rm TMO_{\Delta_{N_-}}(\Real_-^{n}\times(0,\infty))}$,
 letting $u_{-,e}$ be the even extension of $u_-$ on $\mathbb R^{n+1}_-$. Then, by the result in \cite{FN1}, there exists an even function $\bar f^1\in {\rm BMO}(\RR)$ such that $ u_{-,e}(x,t)=e^{t \Delta}\bar f^1(x)$ for  $\ x\in \RR,$  and $\norm{\bar f^1}_{\rm BMO(\RR)}\le C \| u_{-,e}\|_{{\rm TMO_{\Delta}}(\Real_+^{n+1})}.$ Then $\bar f^1_-(x)\in \rm BMO_r(\mathbb R_-^n)$ and,
 for any   $x\in \RR_-$, $$u(x,t)= u_-(x,t)=u_{-,e}(x,t)=e^{t \Delta}\bar f^1(x)=e^{t \Delta}\bar f^1_{-,e}(x)=e^{t \Delta_{\rm N_-}}\bar f^1_-(x)$$
  with $$\norm{\bar f^1_-}_{{\rm BMO_r}(\mathbb R_-^n)}\le \norm{\bar f^1}_{{\rm BMO}(\mathbb R^n)}\le  C \| u_{-,e}\|_{{\rm TMO_{\Delta}}(\Real_+^{n+1})}\le C \| u\|_{{\rm TMO_{\Delta_{N_-}}}(\Real_-^{n}\times(0,\infty))}$$ and $$\partial_{x_n} e^{t \Delta_{\rm N_-}}\bar f^1_-(x',0)=0.$$

  Let $f(x)=f^1_+(x)+\bar f^1_-(x),$ for any $x\in \RR.$ Then $f\in \rm BMO_{\Delta_N}(\RR)$ and
  $$e^{t \Delta_{N}}f(x)=(e^{t \Delta_{N}}f)_+(x)+(e^{t \Delta_{N}}f)_-(x)=e^{t \Delta_{N_+}}f^1_+(x)+e^{t \Delta_{N_-}}\bar f^1_-(x)=u(x,t)$$
  for any $x\in \RR.$ And we have $$\norm{f}_{{\rm BMO_{\Delta_N}}(\mathbb R^n)}\le \max \Big\{ \norm{f^1}_{{\rm BMO_r}(\mathbb R^n_+)}, \norm{\bar f^1}_{{\rm BMO_r}(\mathbb R^n_-)}\Big\}\le  C \|u\|_{{\rm TMO_{\Delta_{N}}}(\Real_+^{n+1})}$$ and $$\partial_{x_n} e^{t \Delta_{\rm N}} f(x',0)=\partial_{x_n} e^{t \Delta_{\rm N_+}} f^1_+(x',0)+\partial_{x_n} e^{t \Delta_{\rm N_-}} \bar f^1_-(x',0)=0.$$

  $(3)$
  For any $f\in \rm\nabla\cdot \rm BMO_{\Delta_{N}}(\mathbb{R}^{n})$, there exists
   $f_{1},f_{2},\cdot\cdot\cdot, f_{n}\in \rm BMO_{\Delta_{N}}(\mathbb{R}^{d})$  such that $f=\sum_{j=1}^{j=n}\partial_{j}f_{j}$, we have
   $$\left\|f\right\|_{\rm BMO_{\Delta_{N}}^{-1}(\mathbb{R}^{n})}\leq
   \sum_{j=1}^{j=n}\left\|\partial_{j}f_{j}\right\|_{\rm BMO_{\Delta_{N}}^{-1}(\mathbb{R}^{n})}
   \leq
   \sum_{j=1}^{j=n}\left\|f_{j}\right\|_{\rm BMO_{\Delta_{N}}
   (\mathbb{R}^{n})}.$$
   On the other hand, if  $f\in \rm BMO_{\Delta_{N}}^{-1}(\mathbb{R}^{n})$ and $f_{j,k}=\partial_{j}\partial_{k}(-\Delta)^{-1}f$,  It is suffice to prove we
    $f_{j,k}\in \rm BMO_{\Delta_{N}}^{-1}(\mathbb{R}^{n})$. In fact,
    by  the term \eqref{e2.2} below, we have that
 \begin{align*}
\left\|\partial_{j}\partial_{k}(-\Delta_{N})^{-1}f\right
\|_{\rm BMO_{\Delta_{N}}^{-1}(\mathbb{R}^{d})}
&\le\frac{1}{2}\left\|\partial_{j}\partial_{k}(-\Delta_{N})^{-1}f_{+,e}\right\|_{\rm BMO^{-1}(\mathbb{R}^{n})}^{2}+
\frac{1}{2}\left\|\partial_{j}\partial_{k}(-\Delta_{N})^{-1}
f_{-,e}\right\|_{\rm BMO^{-1}(\mathbb{R}^{n})}^{2}\\
&\le\frac{1}{2}\left\|f_{+,e}\right\|_{\rm BMO^{-1}(\mathbb{R}^{n})}^{2}+
\frac{1}{2}\left\|
f_{-,e}\right\|_{\rm BMO^{-1}(\mathbb{R}^{n})}^{2}\lesssim\left\|
f\right\|_{\rm BMO_{\Delta_{N}}^{-1}(\mathbb{R}^{n})}^{2}\\
\end{align*}
Thus we have $f_{k}=-\partial_{k}(-\Delta)^{-1}f\in \rm BMO_{\Delta_{N}}(\mathbb{R}^{n})$ and
$f=\sum_{k=1}^{k=n}\partial_{k}f_{k}$.
 And we end the proof.
 \end{proof}

\section{Proof of  well-posedness of parabolic equations  with  Neumann boundary condition
\setcounter{equation}{0}
}
In this section, we shall prove the well-posedness of the system \eqref{ee1.8}
 with initial data in $\rm BMO_{\Delta_{N}}^{-1}$, which can be obtained by combining the characterization of the $\rm BMO_{\Delta_{N}}(\mathbb{R}^{n})$
 derived in Theorem  \ref{th1.1} and the Banach contraction mapping principle. Therefore
it is necessary to examine the linear and nonlinear terms of  \eqref{ee1.8}. To do so, we need the following functions estimates.

\begin{prop}\label{le2.33} According to  all the notations as in \eqref{eee1.9}, one has
\begin{equation}\label{ee2.1}
\left\|f\right\|_{T^{\infty, 2}(\mathbb{R}^{n+1}_{+})}\thickapprox \left\|f_{+,e}\right\|_{T^{\infty, 2}(\mathbb{R}^{n+1}_{+})}+
\left\|f_{-,e}\right\|_{T^{\infty, 2}(\mathbb{R}^{n+1}_{+})},
\end{equation}
here $f_{\pm,e}$ is the even extension of the restriction of $f$ from $\mathbb{R}^{n}_{\pm}$. Namely, $f\in T^{\infty, 2}(\mathbb{R}^{n+1}_{+})$
if and only if $f_{+,e}\in T^{\infty, 2}(\mathbb{R}^{n+1}_{+})$ and $f_{-,e}\in T^{\infty, 2}(\mathbb{R}^{n+1}_{+})$.  Simlarly, we also have
\begin{equation}\label{e2.2}
\left\|f\right\|_{\rm BMO^{-1}_{\Delta_{N}}(\mathbb{R}^{n})}\thickapprox \left\|f_{+,e}\right\|_{\rm BMO^{-1}(\mathbb{R}^{n})}+
\left\|f_{-,e}\right\|_{\rm BMO^{-1}(\mathbb{R}^{n})},
\end{equation}
\begin{equation}\label{ee2.3}
\left\|f\right\|_{T^{\infty, 1}(\mathbb{R}^{n+1}_{+})}\thickapprox \left\|f_{+,e}\right\|_{T^{\infty, 1}(\mathbb{R}^{n+1}_{+})}+
\left\|f_{-,e}\right\|_{T^{\infty, 1}(\mathbb{R}^{n+1}_{+})},
\end{equation}
\begin{equation}\label{ee2.4}
\left\|t^{1/2}f\right\|_{L^{\infty}((0, \infty)\times \mathbb{R}^{n})}\thickapprox \left\|t^{1/2}f_{+,e}\right\|_{L^{\infty}((0, T)\times \mathbb{R}^{n})}+
\left\|t^{1/2}f_{-,e}\right\|_{L^{\infty}((0, \infty)\times \mathbb{R}^{n})},
\end{equation}
and there exist a positive constant $C$ such that
\begin{equation}\label{ee2.5}
\left\|t^{1/2}\exp(t\Delta_{N})f\right\|_{L^{\infty}((0, \infty)\times \mathbb{R}^{n})}\leq C \left\|f_{+,e}\right\|_{\dot{B}_{\infty,\infty}^{-1}(\mathbb{R}^{n})}+
C\left\|f_{-,e}\right\|_{\dot{B}_{\infty,\infty}^{-1}(\mathbb{R}^{n})}.
\end{equation}
\end{prop}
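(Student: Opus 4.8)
The plan is to reduce every equivalence in the statement to the single structural fact that underlies the whole section: the Neumann heat semigroup $e^{t\Delta_N}$ decouples across the hyperplane $\{x_n=0\}$ into $e^{t\Delta_{N_+}}$ on $\mathbb R^n_+$ and $e^{t\Delta_{N_-}}$ on $\mathbb R^n_-$, and each one-sided Neumann semigroup is intertwined with the \emph{full} classical heat semigroup on $\mathbb R^n$ via even reflection: $(e^{t\Delta_{N_+}}g)(x)=(e^{t\Delta}g_{e})(x)$ for $x\in\mathbb R^n_+$, where $g_e$ is the even extension of $g$ across $\{x_n=0\}$. This is exactly the mechanism already used in the proof of Theorem \ref{th1.1}(1)–(2). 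So for each of the norms $T^{\infty,2}$, $\mathrm{BMO}^{-1}$, $T^{\infty,1}$, $L^\infty(t^{1/2}\cdot)$, I would first split $f=f\chi_{\mathbb R^n_+}+f\chi_{\mathbb R^n_-}$ and observe that the quantity being measured (a space–time integral or sup over balls/tubes) is, up to the constant $2$ in one direction and a trivial lower bound in the other, the sum of the corresponding quantities for $f\chi_{\mathbb R^n_+}$ and $f\chi_{\mathbb R^n_-}$; then replace each half-space object by its even reflection, which changes nothing on the relevant half-space and doubles integrals symmetrically, to arrive at $\|f_{+,e}\|_{X}+\|f_{-,e}\|_{X}$ with $X$ the \emph{classical} (non-Neumann) version of the space. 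The point is that all four identities \eqref{ee2.1}, \eqref{e2.2}, \eqref{ee2.3}, \eqref{ee2.4} are literally the same argument with a different integrand.

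Concretely: for \eqref{ee2.1} and \eqref{ee2.3} the ``norm'' is already a sup of a local integral of $|f|^2$ (resp. $|f|$) that does not even involve the semigroup, so the equivalence is purely a statement about splitting a nonnegative integrand across $\{x_n=0\}$ and reflecting — for the $\lesssim$ direction one uses that a ball $B(x,\sqrt t)$ meeting both half-spaces is contained in the union of two reflected balls, and for $\gtrsim$ one restricts a reflected ball back to its half-space. For \eqref{ee2.4} it is the trivial observation $\|g\|_{L^\infty(\mathbb R^n)}=\max\{\|g\chi_{\mathbb R^n_+}\|_\infty,\|g\chi_{\mathbb R^n_-}\|_\infty\}$ combined with the fact that even reflection preserves the $L^\infty$ norm (note the harmless typo ``$(0,T)$'' in \eqref{ee2.4} should read $(0,\infty)$). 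For \eqref{e2.2} one feeds the semigroup identity into the definition of $\mathrm{BMO}^{-1}_{\Delta_N}$ given in Theorem \ref{th1.1}(3): $e^{t\Delta_N}f$ restricted to the upper tube equals $e^{t\Delta}f_{+,e}$ there, so the Carleson functional defining $\mathrm{BMO}^{-1}_{\Delta_N}$ splits exactly as in the proof of part (1) of Theorem \ref{th1.1}. Finally \eqref{ee2.5} is not an equivalence but a one-sided bound: one writes $t^{1/2}e^{t\Delta_N}f=t^{1/2}e^{t\Delta}f_{+,e}$ on $\mathbb R^n_+\times(0,\infty)$ and invokes the classical characterization $\|t^{1/2}e^{t\Delta}g\|_{L^\infty(\mathbb R^{n+1}_+)}\approx\|g\|_{\dot B^{-1}_{\infty,\infty}(\mathbb R^n)}$, then does the same on $\mathbb R^n_-$ and adds.

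The main obstacle — really the only place requiring care — is the interaction of the reflection with balls and tubes that straddle the boundary $\{x_n=0\}$: one must check that in $\eqref{ee2.1}$–$\eqref{ee2.3}$ the supremum over centers $x$ with small $x_n$ (or negative $x_n$) is genuinely controlled from both sides, i.e. that covering a straddling ball $B(x_B,r_B)$ by $B(x_B',r_B)$ and its mirror image across $\{x_n=0\}$ costs only a bounded overlap constant, and conversely that the even extension does not create extra mass outside what the one-sided norm sees. This is elementary geometry of half-spaces but is the step where the claimed $\thickapprox$ (as opposed to a clean identity) actually comes from. A secondary minor point is that $f_{+,e}$ and $f_{-,e}$ are a priori only defined a.e. and one should note the decoupling identities for the semigroups hold in the sense of the weak formulation recalled after \eqref{el.4}, so that the manipulations are legitimate for $f\in M$ (resp.\ for tempered distributions in $\mathrm{BMO}^{-1}_{\Delta_N}$). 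Once these bookkeeping issues are dispatched, each of the five displays follows in a few lines.
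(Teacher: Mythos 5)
Your plan is correct and follows essentially the same route as the paper: split $f$ across the hyperplane $\{x_n=0\}$, bound each piece by its even extension (and conversely control the reflected part by the integral over the mirrored ball $B(\tilde x,\sqrt t)$), and for \eqref{e2.2} and \eqref{ee2.5} use the identification of $e^{t\Delta_{N_\pm}}$ with $e^{t\Delta}$ applied to the even extensions, the paper treating \eqref{ee2.3}--\eqref{ee2.5} exactly as you do, by declaring them ``similar'' to \eqref{ee2.1}--\eqref{e2.2}. Your additional invocation of the classical heat-semigroup characterization of $\dot B^{-1}_{\infty,\infty}$ for \eqref{ee2.5} is precisely the standard fact the paper's omitted argument relies on, so there is no substantive difference.
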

\begin{proof}
 Let us begin with the proof of the inequality \eqref{ee2.1},  for any $t>0$,
\begin{equation}\label{ee2.6}
\begin{aligned}
& t^{-n/2} \int_0^{t}\int_{B(x, \sqrt{t})}  \left|f(y,s)\right|^2
{dy ds }\\
 &\ \ \ \le t^{-n/2} \int_0^{t}\int_{B(x, \sqrt{t}), y\in\mathbb{R}^{n}_{+}}  \left|f_{+,e}(y,s)\right|^2
{dy ds }+ t^{-n/2} \int_0^{t}\int_{B(x, \sqrt{t}), y\in\mathbb{R}^{n}_{-}}  \left|f_{-,e}(y,s)\right|^2
{dy ds }\\
&\ \ \ \le t^{-n/2} \int_0^{t}\int_{B(x, \sqrt{t})}  \left|f_{+,e}(y,s)\right|^2
{dy ds }+ t^{-n/2} \int_0^{t}\int_{B(x, \sqrt{t})}  \left|f_{-,e}(y,s)\right|^2
{dy ds }\\
&\ \ \ \le \left\|f_{+,e}\right\|_{T^{\infty, 2}(\mathbb{R}^{n+1}_{+})}^{2}+
\left\|f_{-,e}\right\|_{T^{\infty, 2}(\mathbb{R}^{n+1}_{+})}^{2}.
\end{aligned}
\end{equation}
 Conversely,
  \begin{align*}
& t^{-n/2} \int_0^{t}\int_{B(x, \sqrt{t})}  \left|f_{+,e}(y,s)\right|^2
{dy ds }\\
&\ \ \ \le t^{-n/2} \int_0^{t}\int_{B(x, \sqrt{t}), y\in\mathbb{R}^{n}_{+}}  \left|f_{+}(y,s)\right|^2
{dy ds }+ t^{-n/2} \int_0^{t}\int_{B(\tilde{x}, \sqrt{t}), y\in\mathbb{R}^{n}_{+}}  \left|f_{+}(y,s)\right|^2
{dy ds }\\
&\ \ \ \le 2\left\|f\right\|_{T^{\infty, 2}(\mathbb{R}^{n+1}_{+})}^{2},
\end{align*}
and, similarly,
 \begin{align*}
 t^{-n/2} \int_0^{t}\int_{B(x, \sqrt{t})}  \left|f_{-,e}(y,s)\right|^2
{dy ds }
\le 2\left\|f\right\|_{T^{\infty, 2}(\mathbb{R}^{n+1}_{+})}^{2}.
\end{align*}
These together with the estimates \eqref{ee2.6} imply
 \begin{align*}
\left\|f\right\|_{T^{\infty, 2}(\mathbb{R}^{n+1}_{+})}\le
\left\|f_{+,e}\right\|_{T^{\infty, 2}(\mathbb{R}^{n+1}_{+})}+
\left\|f_{-,e}\right\|_{T^{\infty, 2}(\mathbb{R}^{n+1}_{+})}\le 2\sqrt{2}\left\|f\right\|_{T^{\infty, 2}(\mathbb{R}^{n+1}_{+})},
\end{align*} and which completes the proof of \eqref{ee2.1}.

For the term \eqref{e2.2}, obviously,
 \begin{align*}
& t^{-n/2} \int_0^{t}\int_{B(x, \sqrt{t})}  \left|\exp(t\Delta_{N})f(y,s)\right|^2
{dy ds }\\
 &\ \ \ \ \le \frac{1}{2}t^{-n/2} \int_0^{t}\int_{B(x, \sqrt{t})}  \left|\exp(t\Delta)f_{+,e}(y,s)\right|^2
{dy ds }\\
&\ \ \ \ \ \ \ \ +\frac{1}{2}t^{-n/2} \int_0^{t}\int_{B(x, \sqrt{t})}  \left|\exp(t\Delta)f_{-,e}(y,s)\right|^2
{dy ds }\\
&\ \ \ \ \le \frac{1}{2}\left\|f_{+,e}\right\|_{\rm BMO^{-1}(\mathbb{R}^{n})}^{2}+
\frac{1}{2}\left\|f_{-,e}\right\|_{\rm BMO^{-1}(\mathbb{R}^{n})}^{2}.
\end{align*}

Similarly, we have
\begin{align*}
& t^{-n/2} \int_0^{t}\int_{B(x, \sqrt{t})}  \left|\exp(t\Delta)f_{+,e}(y,s)\right|^2
{dy ds }\\
&\ \ \ \ \le 2t^{-n/2} \int_0^{t}\int_{B(x, \sqrt{t})}  \left|\exp(t\Delta_{N})f(y,s)\right|^2
{dy ds }\le2\left\|f\right\|_{\rm BMO^{-1}_{\Delta_{N}}(\mathbb{R}^{n})}^{2}
\end{align*}
and
\begin{align*}
t^{-n/2} \int_0^{t}\int_{B(x, \sqrt{t})}  \left|\exp(t\Delta)f_{-,e}(y,s)\right|^2
{dy ds }
 \le2\left\|f\right\|_{\rm BMO^{-1}_{\Delta_{N}}(\mathbb{R}^{n})}^{2}.
\end{align*}
Finally, we have equivalent norms
 \begin{align*}
\frac{\sqrt{2}}{4}\left\|f\right\|_{\rm BMO^{-1}_{\Delta_{N}}(\mathbb{R}^{n})}\le
\left\|f_{+,e}\right\|_{\rm BMO^{-1}(\mathbb{R}^{n})}+
\left\|f_{-,e}\right\|_{\rm BMO^{-1}(\mathbb{R}^{n})}\le \frac{\sqrt{2}}{2}
\left\|f\right\|_{\rm BMO^{-1}_{\Delta_{N}}(\mathbb{R}^{n})}.
\end{align*}
Collecting terms above, we have the desired estimate \eqref{e2.2} and thus complete the proof of \eqref{e2.2}.
The terms \eqref{ee2.3}-\eqref{ee2.5} can be proved similarly, here we omit the details.
 \end{proof}

\begin{prop} \label{le3332.2}
Suppose $u_{0}\in \rm BMO^{-1}_{\Delta_{N}}(\mathbb{R}^{n})$, then we have that $$\left\|\exp(t\triangle_{N})u_{0}\right\|_{\varepsilon}\lesssim
\left\|u_{0}\right\|_{\rm BMO^{-1}_{\Delta_{N}}(\mathbb{R}^{n})}.$$
\end{prop}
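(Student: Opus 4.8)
The plan is to estimate the two constituent quantities in $\|\cdot\|_{\varepsilon}$ from \eqref{eee1.9} separately, in each case reducing the Neumann semigroup on $\mathbb{R}^{n}_{\pm}$ to the classical heat semigroup on $\mathbb{R}^{n}$ via even reflection, and then invoking the norm equivalences already recorded in Proposition~\ref{le2.33} together with two standard facts about $e^{t\Delta}$ on $\mathbb{R}^{n}$: the Carleson-measure characterization of $\mathrm{BMO}^{-1}(\mathbb{R}^{n})$ and the continuous embedding $\mathrm{BMO}^{-1}(\mathbb{R}^{n})\hookrightarrow \dot B^{-1}_{\infty,\infty}(\mathbb{R}^{n})$.

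First I would handle the tent-space term. Writing $u(x,t)=e^{t\Delta_{N}}u_{0}(x)$, by definition of $\|\cdot\|_{T^{\infty,2}}$ one has $\|u\|_{T^{\infty,2}(\mathbb{R}^{n+1}_{+})}^{2}=\sup_{x,r} r^{-n}\int_{0}^{r^{2}}\int_{B(x,r)}|e^{t\Delta_{N}}u_{0}(y)|^{2}\,dy\,ds$, which is precisely the quantity appearing in item (3) of Theorem~\ref{th1.1}; hence, up to the normalizing constants, this term equals $\|u_{0}\|_{\mathrm{BMO}^{-1}_{\Delta_{N}}(\mathbb{R}^{n})}$. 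Alternatively, and more robustly, one uses \eqref{ee2.1} to split into the even extensions, observes that $(e^{t\Delta_{N}}u_{0})_{\pm,e}=e^{t\Delta}(u_{0})_{\pm,e}$ (because the Gaussian heat kernel commutes with even reflection across $\{x_{n}=0\}$), applies the classical equivalence $\|e^{t\Delta}g\|_{T^{\infty,2}(\mathbb{R}^{n+1}_{+})}\approx\|g\|_{\mathrm{BMO}^{-1}(\mathbb{R}^{n})}$, and closes with \eqref{e2.2}.

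Next I would handle the pointwise term $\|t^{1/2}e^{t\Delta_{N}}u_{0}\|_{L^{\infty}(\mathbb{R}^{n}\times(0,\infty))}$. By \eqref{ee2.5} there is $C>0$ with $\|t^{1/2}e^{t\Delta_{N}}u_{0}\|_{L^{\infty}}\le C\|(u_{0})_{+,e}\|_{\dot B^{-1}_{\infty,\infty}(\mathbb{R}^{n})}+C\|(u_{0})_{-,e}\|_{\dot B^{-1}_{\infty,\infty}(\mathbb{R}^{n})}$. Since $\mathrm{BMO}^{-1}(\mathbb{R}^{n})\hookrightarrow \dot B^{-1}_{\infty,\infty}(\mathbb{R}^{n})$ continuously, the right-hand side is $\lesssim\|(u_{0})_{+,e}\|_{\mathrm{BMO}^{-1}(\mathbb{R}^{n})}+\|(u_{0})_{-,e}\|_{\mathrm{BMO}^{-1}(\mathbb{R}^{n})}$, and then \eqref{e2.2} bounds this by $\lesssim\|u_{0}\|_{\mathrm{BMO}^{-1}_{\Delta_{N}}(\mathbb{R}^{n})}$. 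Adding the two estimates and recalling the definition \eqref{eee1.9} of $\|\cdot\|_{\varepsilon}$ gives the claim.

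The argument is essentially bookkeeping once Proposition~\ref{le2.33} is available, so I do not expect a serious obstacle; the points that deserve care are (i) that $e^{t\Delta_{N}}$ is well defined on the tempered distributions forming $\mathrm{BMO}^{-1}_{\Delta_{N}}$, which follows from the Gaussian bound $0\le p_{t,\Delta_{N}}(x,y)\le h_{t}(x-y)$ for the kernel together with a density argument; (ii) the reflection identity $(e^{t\Delta_{N}}u_{0})_{\pm,e}=e^{t\Delta}(u_{0})_{\pm,e}$, which must be read off from the explicit reflected-Gaussian form of $p_{t,\Delta_{N}}$ and the fact that $e^{t\Delta}$ preserves evenness; and (iii) citing correctly the two classical $\mathbb{R}^{n}$ facts used above (the Carleson characterization of $\mathrm{BMO}^{-1}$ and the embedding into $\dot B^{-1}_{\infty,\infty}$). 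Each of these is routine, so the proposition follows by assembling them with Proposition~\ref{le2.33}.
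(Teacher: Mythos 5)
Your proposal is correct and follows essentially the same route as the paper: the tent-space term is identified (via reflection and \eqref{ee2.1}--\eqref{e2.2}, or directly from the definition in Theorem~\ref{th1.1}(3)) with $\left\|u_{0}\right\|_{\rm BMO^{-1}_{\Delta_{N}}(\mathbb{R}^{n})}$, while the $L^{\infty}$ term is handled through \eqref{ee2.5} together with the embedding ${\rm BMO}^{-1}(\mathbb{R}^{n})\hookrightarrow \dot{B}^{-1}_{\infty,\infty}(\mathbb{R}^{n})$ and \eqref{e2.2}. This is exactly the paper's argument, with your version spelling out the bookkeeping slightly more explicitly.
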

\begin{proof}
By Lemma~\ref{le2.33} and $\rm  BMO^{-1}(\mathbb{R}^{n})
\hookrightarrow \dot{B}^{-1}_{\infty,\infty}(\mathbb{R}^{n})$, where
$\dot{B}^{-1}_{\infty,\infty}(\mathbb{R}^{n})$ is homogeneous Besov space (the definition of homogeneous Besov spaces, see \cite{YSY}),
it thus follows that
\begin{align*}\label{ee1.9}
\left\|\exp(t\triangle_{N})u_{0}\right\|_{\varepsilon}
&=\left\|t^{1/2}\exp(t\triangle_{N})u_{0}\right\|_{L^{\infty}((0, \infty)\times \mathbb{R}^{n})}\\
&\ \ \ \ \ +\sup_{x\in \mathbb{R}^{n}, t\in(0, \infty)}\left( t^{-n/2} \int_0^{t}\int_{B(x, \sqrt{t})}  \left|\exp(s\triangle_{N})u_{0}(y,s)\right|^2
{dy ds }\right)^{1/2} \\
&\leq \left\|u_{0,+,e}\right\|_{\dot{B}_{\infty,\infty}^{-1}(\mathbb{R}^{n})}+
\left\|u_{0,-,e}\right\|_{\dot{B}_{\infty,\infty}^{-1}(\mathbb{R}^{n})}
+\left\|u_{0}\right\|_{BMO^{-1}_{\Delta_{N}}(\mathbb{R}^{n})}\lesssim \left\|u_{0}\right\|_{BMO^{-1}_{\Delta_{N}}(\mathbb{R}^{n})}.
\end{align*}
Collecting terms, we have the desired estimate and thus complete the proof of  Proposition \ref{le3332.2}.
\end{proof}
Let $\alpha:= u\times v$ and $u,v\in\varepsilon$ and
$$\mathcal{A}(\alpha)(t):=\int_{0}^{t}e^{(t-s)\Delta_{N}}div_{x}\alpha(s, . ) ds.$$
\begin{prop} \label{leq2.2} Let $u, v\in \varepsilon$. A positive constant $C$ exists such that
 \begin{align*}
\left\|t^{\frac{1}{2}}\mathcal{A}(\alpha)\right\|_{L^{\infty}((0, \infty)\times \mathbb{R}^{n})}
\leq C\left\|\alpha\right\|_{T^{\infty,1}(\mathbb{R}^{n})}+C\left\|s\alpha(s)\right\|_{L^{\infty}((0, \infty)\times \mathbb{R}^{n})}.
\end{align*}
\end{prop}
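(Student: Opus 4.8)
The plan is to bound $t^{1/2}\lvert\mathcal{A}(\alpha)(t,x)\rvert$ pointwise, uniformly in $(t,x)$, by splitting the time integral at $s=t/2$ in the spirit of Koch--Tataru. First I would record two facts about the Neumann heat semigroup that follow at once from the explicit formula for $p_{\tau,\Delta_N}(x,y)$ given in the introduction (and, for the boundary issue, from the Neumann structure of \eqref{ee1.8}, which makes the boundary term at $\{x_n=0\}$ vanish when $\mathrm{div}_x$ is moved onto the kernel): the operator $e^{\tau\Delta_N}\,\mathrm{div}_x$ has a kernel $K_\tau(x,y)$ with
\begin{equation*}
\lvert K_\tau(x,y)\rvert\le \frac{C}{\tau^{(n+1)/2}}\,e^{-c\lvert x-y\rvert^2/\tau},\qquad x,y\in\mathbb{R}^n,\ \tau>0,
\end{equation*}
and in particular $\left\|e^{\tau\Delta_N}\mathrm{div}_x g\right\|_{L^\infty(\mathbb{R}^n)}\le C\tau^{-1/2}\left\|g\right\|_{L^\infty(\mathbb{R}^n)}$; both are elementary consequences of the explicit combination of Gaussians defining $p_{\tau,\Delta_N}$, and one may alternatively reduce them to the classical semigroup on $\mathbb{R}^n$ via Proposition~\ref{le2.33}.

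Now fix $(t,x)$ and write $\mathcal{A}(\alpha)(t,x)=\int_0^{t/2}+\int_{t/2}^t=:\mathrm{I}+\mathrm{II}$. For the near-diagonal piece I would use the $L^\infty\!\to\!L^\infty$ bound together with the factor $s^{-1}$: since $s\ge t/2$ on $(t/2,t)$,
\begin{equation*}
\lvert\mathrm{II}\rvert\le \int_{t/2}^t \frac{C}{\sqrt{t-s}}\,\left\|\alpha(s)\right\|_{L^\infty}\,ds\le \frac{2C}{t}\,\left\|s\alpha(s)\right\|_{L^\infty}\int_{t/2}^t\frac{ds}{\sqrt{t-s}}\le \frac{C'}{\sqrt t}\,\left\|s\alpha(s)\right\|_{L^\infty((0,\infty)\times\mathbb{R}^n)},
\end{equation*}
hence $t^{1/2}\lvert\mathrm{II}\rvert\lesssim \left\|s\alpha(s)\right\|_{L^\infty((0,\infty)\times\mathbb{R}^n)}$. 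For the off-diagonal piece one has $t-s\simeq t$, so $\lvert K_{t-s}(x,y)\rvert\lesssim t^{-(n+1)/2}e^{-c\lvert x-y\rvert^2/t}$; decomposing $\mathbb{R}^n$ into $A_0=B(x,\sqrt t)$ and the dyadic annuli $A_j=B(x,2^j\sqrt t)\setminus B(x,2^{j-1}\sqrt t)$ for $j\ge 1$, on which $e^{-c\lvert x-y\rvert^2/t}\le e^{-c4^{j-1}}$, and using that $\int_0^{4^j t}\int_{B(x,2^j\sqrt t)}\lvert\alpha(y,s)\rvert\,dy\,ds\le \left\|\alpha\right\|_{T^{\infty,1}}(4^j t)^{n/2}$ by definition of the $T^{\infty,1}$ norm, I get
\begin{equation*}
\lvert\mathrm{I}\rvert\lesssim \frac{1}{t^{(n+1)/2}}\sum_{j\ge 0} e^{-c4^{j-1}}\int_0^{t}\!\int_{A_j}\lvert\alpha(y,s)\rvert\,dy\,ds\lesssim \frac{\left\|\alpha\right\|_{T^{\infty,1}}}{\sqrt t}\sum_{j\ge0} e^{-c4^{j-1}}2^{jn},
\end{equation*}
and the last series converges, whence $t^{1/2}\lvert\mathrm{I}\rvert\lesssim \left\|\alpha\right\|_{T^{\infty,1}(\mathbb{R}^n)}$. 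Adding the two estimates and taking the supremum over $(t,x)$ yields the proposition.

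The computational core — the two kernel bounds and the summation of the Gaussian tails — is routine. The one place that genuinely deserves care is the treatment of $e^{\tau\Delta_N}\mathrm{div}_x$ near $\{x_n=0\}$: the legitimacy of integrating by parts to place $\mathrm{div}_x$ on the Neumann kernel and the vanishing of the resulting boundary term. I would dispatch this either directly from the explicit kernel or by invoking Proposition~\ref{le2.33} to split $\alpha$ into its even-extended pieces $\alpha_{\pm,e}$ and reduce to the classical heat kernel on $\mathbb{R}^n$, where these facts are standard and already used by Auscher--Frey. Beyond that bookkeeping I expect no real obstacle.
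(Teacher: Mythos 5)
Your proposal is correct and takes essentially the same route as the paper's proof: the paper also splits $\mathcal{A}(\alpha)$ at $s=t/2$, reduces the gradient of the Neumann kernel to classical Gaussian-type bounds via the even extensions $\alpha_{\pm,e}$ (using Proposition \ref{le2.33}), controls the piece $0<s<t/2$ by the $T^{\infty,1}$ norm through a spatial decomposition at scale $\sqrt{t}$, and controls the piece $t/2<s<t$ by $\left\|s\alpha(s)\right\|_{L^{\infty}}$ exactly as you do. The only differences are cosmetic (dyadic annuli with Gaussian tails versus the paper's lattice of cubes with polynomial tails, and your explicit $L^{\infty}\to L^{\infty}$ bound $\tau^{-1/2}$ for $e^{\tau\Delta_{N}}\mathrm{div}_{x}$ in place of the paper's direct kernel integration).
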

\begin{proof}
We first split the integral as
\begin{align*}
\mathcal{A}(\alpha)(t):=\int_{0}^{t/2}e^{(t-s)\Delta_{N}}div_{x}\alpha(s, . ) ds+
\int_{t/2}^{t}e^{(t-s)\Delta_{N}}div_{x}\alpha(s, . ) ds
\end{align*}
From the definition of $p_{t-s,\Delta_{N}}(x,y)$, using the decay of the heat kernel at $\infty$, the divergence term can be estimated as following
\begin{align*}
&\left|\nabla\exp((t-s)\triangle_{N})\alpha(s,x)\right|\\
&\ \ \ \ \lesssim\int_{\mathbb{R}^{n}_{+}}\left|\nabla p_{t-s,\Delta_{N}}(x,y)\right|
\left|\alpha_{+}(s,y)\right|dy\\
&\ \ \ \ \lesssim\int_{\mathbb{R}^{n}_{+}}\left(\sum_{i=1}^{n-1} \left|\frac{\partial}{\partial x_{i}}p_{t-s,\Delta_{N_{+}}}(x,y)\right|
+\left|\frac{\partial}{\partial x_{n}}p_{t-s,\Delta_{N_{+}}}(x,y)\right|\right)
\left|\alpha_{+}(s,y)\right|dy\\
&\ \ \ \ \lesssim\int_{\mathbb{R}^{n}_{+}}\sum_{i=1}^{n} \left|\frac{x_{i}-y_{i}}{2t}\frac{1}{(4\pi t)^{n/2}}e^{-\frac{|x-y|^{2}}{4t}}\right|
\left|\alpha_{+}(s,y)\right|dy\\
&\ \ \ \ \ \ \ \ +\int_{\mathbb{R}^{n}_{+}}e^{-\frac{|x_{n}+y_{n}|^{2}}{4t}}\left(\sum_{i=1}^{n-1} \left|\frac{x_{i}-y_{i}}{2t}\frac{1}{(4\pi t)^{n/2}}e^{-\frac{|x'-y'|^{2}}{4t}}
\right|+\left|\frac{x_{n}+y_{n}}{2t}\frac{1}{(4\pi t)^{n/2}}e^{-\frac{|x'-y'|^{2}}{4t}}
\right|\right)
\left|\alpha_{+}(s,y)\right|dy\\
&\ \ \ \ \lesssim\int_{\mathbb{R}^{n}}\left|\nabla p_{t-s,\Delta}(x,y)\right|
\left|\alpha_{+,e}(s,y)\right|dy\\
&\ \ \ \ \lesssim\int_{\mathbb{R}^{n}}\left(t-s\right)^{-n/2}\left(1+\left(t-s\right)^{-1/2}\left|x-y\right| \right)^{-n-1}
\left|\alpha_{+,e}(s,y)\right|dy.
\end{align*}
We can also obtain similar estimates  for any $x\in\mathbb{R}^{n}_{-}$,
\begin{align*}
\left|\nabla\exp(t\triangle_{N})\alpha(s,x)\right|
&\lesssim\int_{\mathbb{R}^{n}}\left|\nabla p_{t-s,\Delta}(x,y)\right|
\left|\alpha_{-,e}(s,y)\right|dy\\
&\lesssim\int_{\mathbb{R}^{n}}\left(t-s\right)^{-n/2}\left(1+\left(t-s\right)^{-1/2}\left|x-y\right| \right)^{-n-1}
\left|\alpha_{-,e}(s,y)\right|dy.
\end{align*}
Since $0 < s < t/2$ implies $t- s \thicksim t$, using the fact that every cube of side length $\sqrt{ t}$ is contained in a ball of radius $\sqrt{t}$, we find
that
\begin{align*}
&\sup_{t>0}\left|t^{\frac{1}{2}}\int_{0}^{t/2}e^{(t-s)\Delta_{N}}div_{x}\alpha(s, x) ds\right|\\
& \ \ \ \ \ \leq \sup_{t>0,x\in\mathbb{R}^{n}}\left|t^{\frac{1}{2}}\int_{0}^{t/2}\int_{\mathbb{R}^{n}}\left|\nabla p_{t-s,\Delta}(x,y)\right|\left|\alpha_{+,e}(s,y)\right|dy
ds\right|\\
& \ \ \ \ \ \ \ \ \ + \sup_{t>0,x\in\mathbb{R}^{n}}\left|t^{\frac{1}{2}}\int_{0}^{t/2}
\int_{\mathbb{R}^{n}}\left|\nabla p_{t-s,\Delta}(x,y)\right|
\left|\alpha_{-,e}(s,y)\right|dyds\right|\\
& \ \ \ \ \ \leq \sup_{t>0,x\in\mathbb{R}^{n}}\left|t^{\frac{1}{2}}\int_{0}^{t/2}
\sum_{k\in\mathbb{Z}^{n}}t^{-n/2}\left(1+|k|\right)^{-n}
\left|\alpha_{+,e}(s,y)\right|dy
ds\right|\\
& \ \ \ \ \  \ \ \ \ \ +\sup_{t>0,x\in\mathbb{R}^{n}}\left|t^{\frac{1}{2}}\int_{0}^{t/2}
\sum_{k\in\mathbb{Z}^{n}}t^{-n/2}\left(1+|k|\right)^{-n}
\left|\alpha_{-,e}(s,y)\right|dy
ds\right|\\
& \ \ \ \ \ \leq \sum_{k\in\mathbb{Z}^{n}}\left(1+|k|\right)^{-n}
\left(\|\alpha_{+,e}\|_{T^{\infty, 1}(\mathbb{R}^{n+1}_{+})}+\|\alpha_{-,e}\|_{T^{\infty, 1}(\mathbb{R}^{n+1}_{+})}\right).
\end{align*}
Now we look at $t/2 < s <t$, the estimates of  the kernel of the heat semigroup imply that
\begin{align*}
&\sup_{t>0}\left|t^{\frac{1}{2}}\int_{t/2}^{t}e^{(t-s)\Delta_{N}}div_{x}\alpha(s, x) ds\right|\\
& \ \ \ \ \ \leq \sup_{t>0,x\in\mathbb{R}^{n}}\left|t^{\frac{1}{2}}\int_{0}^{t/2}\int_{\mathbb{R}^{n}}\left|\nabla p_{t-s,\Delta}(x,y)\right|\left|\alpha_{+,e}(s,y)\right|dy
ds\right|\\
& \ \ \ \ \ \ \ \ \ + \sup_{t>0,x\in\mathbb{R}^{n}}\left|t^{\frac{1}{2}}\int_{0}^{t/2}
\int_{\mathbb{R}^{n}}\left|\nabla p_{t-s,\Delta}(x,y)\right|
\left|\alpha_{-,e}(s,y)\right|dyds\right|\\
& \ \ \ \ \ \leq \sup_{t>0,x\in\mathbb{R}^{n}}\left|t^{\frac{1}{2}}\int_{t/2}^{t}
\frac{1}{(4\pi(t-s))^{n/2}}\left(
\int_{\mathbb{R}^{n}}e^{-\frac{|x-y|^{2}}{4(t-s)}}\frac{|x-y|}{2s(t-s)}dy\right)
\left\|s\alpha_{+,e}(s,y)\right\|_{L^{\infty}((0, \infty)\times \mathbb{R}^{n})}ds\right|\\
& \ \ \ \ \  \ \ \ \ \ +\sup_{t>0,x\in\mathbb{R}^{n}}\left|t^{\frac{1}{2}}\int_{t/2}^{t}
\frac{1}{(4\pi(t-s))^{n/2}}\left(
\int_{\mathbb{R}^{n}}e^{-\frac{|x-y|^{2}}{4(t-s)}}\frac{|x-y|}{2s(t-s)}dy\right)
\left\|s\alpha_{-,e}(s,y)\right\|_{L^{\infty}((0, \infty)\times \mathbb{R}^{n})}ds\right|\\
& \ \ \ \ \ \leq \left\|s\alpha_{-,e}(s,y)\right\|_{L^{\infty}((0, \infty)\times \mathbb{R}^{n})}+\left\|s\alpha_{-,e}(s,y)\right\|_{L^{\infty}((0, \infty)\times \mathbb{R}^{n})}.
\end{align*}
Resuming the above estimates, the proof of Proposition \ref{leq2.2} is completed.
\end{proof}
To derive the Carleson measure estimate, we write
\begin{align*}
\mathcal{A}(\alpha)(t):
&=\int_{0}^{t}e^{-(t-s)\L}div_{x}\alpha(s, . ) ds\\
&=\int_{0}^{t}e^{-(t-s)\L}L\mathcal{T}\left(s^{1/2}\alpha(s, . )\right) ds +\int_{0}^{\infty}e^{-(t+s)\L}\left(div_{x}\alpha(s, . )\right) ds\\
&\ \ \ \ \  -\int_{t}^{\infty}e^{-(t-s)\L}s^{-1/2}\left(div_{x}s^{1/2}\alpha(s, . )\right) ds\\
&=:\mathcal{A}_{1}(\alpha)(t)+\mathcal{A}_{2}(\alpha)(t)
+\mathcal{A}_{3}(\alpha)(t),
\end{align*}
where $\L=-\Delta_{N}$ and $$\mathcal{T}F(s,x)=:(s\L)^{-1}(I-e^{-2s\L})div F(s,x).$$
Note that, the operator $\Delta_{N}$ satisfes Gaussian bound and has a bounded $H_{\infty}$-calculus in $L^{2}(\mathbb{R}^{n})$.

For the estimate on $\mathcal{A}_{1}$, we apply the following two lemmas. The first one is an extension of \cite[Theorem 3.2]{9}
 using the structure of the maximal regularity operator.

\begin{lem}\label{le2.34} The operator
\begin{equation*}
\mathcal{M}^{+}: T^{\infty,2}(\mathbb{R}^{n+1}_{+})\rightarrow T^{\infty,2}(\mathbb{R}^{n+1}_{+}),
\end{equation*}
\begin{equation*}
(\mathcal{M}^{+}F)(t,\cdot):=\int_{0}^{t}Le^{-(t-s)\L}F(s,\cdot)ds,
\end{equation*}
is bounded.
\end{lem}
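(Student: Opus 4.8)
The plan is to transfer the assertion for the Neumann operator $\L=-\Delta_{N}$ (here $L=\L=-\Delta_{N}$) to the classical Laplacian on $\mathbb R^{n}$ by an even reflection argument, and then to apply the tent–space boundedness of the \emph{classical} maximal regularity operator $\mathcal M_{\Delta}^{+}G(t,\cdot):=\int_{0}^{t}(-\Delta)e^{(t-s)\Delta}G(s,\cdot)\,ds$ on $T^{\infty,2}(\mathbb R^{n+1}_{+})$, which is the extension of \cite[Theorem 3.2]{9} alluded to above. Two facts that make the reduction work are already in hand: Proposition \ref{le2.33}, namely the equivalence $\|f\|_{T^{\infty,2}(\mathbb R^{n+1}_{+})}\approx\|f_{+,e}\|_{T^{\infty,2}(\mathbb R^{n+1}_{+})}+\|f_{-,e}\|_{T^{\infty,2}(\mathbb R^{n+1}_{+})}$ from \eqref{ee2.1}; and the fact that, by the explicit form of the kernel $p_{t,\Delta_{N}}(x,y)$ recalled above, the Neumann heat semigroup is intertwined with the Euclidean one through the even-in-$x_{n}$ extension. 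Since $\Delta_{N}$ has Gaussian bounds and a bounded $H_{\infty}$-calculus on $L^{2}(\mathbb R^{n})$, the operator $\mathcal M^{+}$ is first defined on a dense subclass of $T^{\infty,2}(\mathbb R^{n+1}_{+})$ and the claimed inequality then extends it by density.

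First I would carry out the reflection. Split $F=F\chi_{\mathbb R^{n}_{+}\times(0,\infty)}+F\chi_{\mathbb R^{n}_{-}\times(0,\infty)}=:F_{+}+F_{-}$. Because $\L=-\Delta_{N}$ decouples $\mathbb R^{n}_{+}$ and $\mathbb R^{n}_{-}$, the function $\mathcal M^{+}F_{\pm}$ is supported in $\mathbb R^{n}_{\pm}\times(0,\infty)$, so $(\mathcal M^{+}F)_{\pm}=(\mathcal M^{+}F_{\pm})\big|_{\mathbb R^{n}_{\pm}\times(0,\infty)}$. For $g$ supported in $\mathbb R^{n}_{+}$ the kernel formula gives $(e^{t\Delta_{N}}g)\big|_{\mathbb R^{n}_{+}}=(e^{t\Delta}g_{e})\big|_{\mathbb R^{n}_{+}}$ with $g_{e}$ the even-in-$x_{n}$ extension; differentiating in $t$ — legitimate by analyticity of the semigroup — yields the same identity for $\L e^{-t\L}=(-\Delta_{N})e^{t\Delta_{N}}$. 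Inserting this under the $s$-integral, and using that $\mathcal M_{\Delta}^{+}$ commutes with the reflection $x_{n}\mapsto-x_{n}$ and hence sends even-in-$x_{n}$ functions to even-in-$x_{n}$ functions, I obtain $(\mathcal M^{+}F)_{+,e}=\mathcal M_{\Delta}^{+}F_{+,e}$ and, symmetrically, $(\mathcal M^{+}F)_{-,e}=\mathcal M_{\Delta}^{+}F_{-,e}$.

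Then, applying \eqref{ee2.1} to $\mathcal M^{+}F$ and to $F$ in turn,
\begin{align*}
\|\mathcal M^{+}F\|_{T^{\infty,2}(\mathbb R^{n+1}_{+})}
&\approx\|\mathcal M_{\Delta}^{+}F_{+,e}\|_{T^{\infty,2}(\mathbb R^{n+1}_{+})}
+\|\mathcal M_{\Delta}^{+}F_{-,e}\|_{T^{\infty,2}(\mathbb R^{n+1}_{+})}\\
&\lesssim\|F_{+,e}\|_{T^{\infty,2}(\mathbb R^{n+1}_{+})}
+\|F_{-,e}\|_{T^{\infty,2}(\mathbb R^{n+1}_{+})}
\approx\|F\|_{T^{\infty,2}(\mathbb R^{n+1}_{+})},
\end{align*}
where the middle inequality is the classical tent-space bound for $\mathcal M_{\Delta}^{+}$. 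This is exactly the assertion of the lemma.

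The main obstacle is precisely this classical bound $\mathcal M_{\Delta}^{+}:T^{\infty,2}(\mathbb R^{n+1}_{+})\to T^{\infty,2}(\mathbb R^{n+1}_{+})$ — the ``extension of \cite[Theorem 3.2]{9} using the structure of the maximal regularity operator''. It does not follow from $L^{p}$-maximal regularity alone, since the kernel $(-\Delta)e^{r\Delta}$ carries a non-integrable $r^{-1}$ singularity as $r=t-s\downarrow 0$; one has to exploit the telescoping identity $\int_{a}^{b}(-\Delta)e^{r\Delta}\,dr=e^{a\Delta}-e^{b\Delta}$ together with the Gaussian off-diagonal decay of $e^{r\Delta}$ and of $r\,\partial_{r}e^{r\Delta}$. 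Concretely I would argue: (i) $\mathcal M_{\Delta}^{+}$ is bounded on $T^{2,2}=L^{2}(\mathbb R^{n+1}_{+},\,dx\,dt)$ by $L^{2}$-maximal regularity, a consequence of the bounded $H_{\infty}$-calculus of $-\Delta$ on $L^{2}(\mathbb R^{n})$; (ii) combined with the off-diagonal estimates, this upgrades to $T^{\infty,2}$ either by a direct Carleson-functional localization or by testing against a molecular decomposition of $T^{1,2}$ via the duality $(T^{1,2})^{*}=T^{\infty,2}$, which is the route taken in the proof of \cite[Theorem 3.2]{9}. A minor, purely bookkeeping point left to verify is the rigorous interchange of reflection, $t$-differentiation, and the $s$-integral on the dense class, which the Gaussian bounds render routine.
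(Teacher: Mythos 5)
Your proposal is correct, but it takes a different route from the paper. The paper's own proof is a two-line verification-of-hypotheses argument: it invokes \cite[Lemma~1.19]{12} to say that $(t\mathcal{L}e^{-t\mathcal{L}})_{t>0}$ with $\mathcal{L}=-\Delta_N$ satisfies Gaussian estimates, hence the $L^2$ off-diagonal estimates required in \cite[Theorem~3.2]{9}, and then applies that tent-space maximal-regularity theorem \emph{directly to the Neumann operator} $-\Delta_N$; no reflection is used. You instead transfer the problem to the Euclidean Laplacian: you split $F=F_++F_-$, use that $e^{t\Delta_N}$ preserves the two half-spaces and is intertwined with $e^{t\Delta}$ through even extension to get $(\mathcal{M}^+F)_{\pm,e}=\mathcal{M}^+_{\Delta}F_{\pm,e}$, and then combine the classical $T^{\infty,2}$ bound for $\mathcal{M}^+_{\Delta}$ with the norm equivalence \eqref{ee2.1} of Proposition~\ref{le2.33}. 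Both arguments ultimately rest on the same external ingredient (the tent-space boundedness of the maximal regularity operator from \cite{9}, extended to $p=\infty$); the difference is only where its hypotheses are checked. Your reduction is sound (the support preservation follows from the Heaviside factor in $p_{t,\Delta_N}$, and the intertwining passes to $\mathcal{L}e^{-r\mathcal{L}}$ since the Neumann boundary condition makes the even extension commute with the Laplacian distributionally), and it has the merit of needing the abstract theorem only for $-\Delta$ and of matching the reflection technique the paper uses in Lemmas~\ref{le3.55}--\ref{leee2.e5}; the paper's route is shorter because $-\Delta_N$ itself already satisfies the Gaussian/off-diagonal bounds, so no transference bookkeeping is required. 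Your closing sketch of how the classical $T^{\infty,2}$ bound is proved ($T^{2,2}$ boundedness from the $H_\infty$-calculus plus off-diagonal decay, upgraded via duality with $T^{1,2}$ or a Carleson localization) is consistent with \cite{9} and does not hide a gap.
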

\begin{proof}
According to  \cite[Lemma 1,19]{12}, $(t\L e^{t\L})_{t>0}$ satisfies Gaussian estimates,
therefore in particular the weaker $\L^{2}(\mathbb{R}^{n})$ off-diagonal estimates of  \cite[Theorem 3.2]{9}. Hence, we can
apply \cite[Theorem 3.2]{9} to obtain that $\mathcal{M}^{+} : T^{\infty,2}(\mathbb{R}^{n+1}_{+})\rightarrow T^{\infty,2}(\mathbb{R}^{n+1}_{+})$.
The proof of Lemma \ref{le2.34} is thus completed.
\end{proof}

\begin{lem}\label{le3.55} The operator
$$\mathcal{T}F(s,x):=T_{s}F(s,\cdot)(x), \  \ \forall (s,x)\in\mathbb{R}^{+}\times \mathbb{R}^{n}$$
with
\begin{equation*}
T_{s}=:(s\L)^{-1}(I-e^{-2s\L})div F(s,x)
\end{equation*}
is bounded from $T^{\infty,2}(\mathbb{R}^{n+1}_{+})$ to $T^{\infty,2}(\mathbb{R}^{n+1}_{+})$.
\end{lem}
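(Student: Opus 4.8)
The plan is to follow the same two-step strategy as in Lemma~\ref{le2.34}: first reduce the Neumann operator $\L=-\Delta_{N}$ to the ordinary Laplacian by a reflection argument, and then exploit the Gaussian bounds and the bounded $H^{\infty}$-calculus of $\L$. For the reduction, take $F\in T^{\infty,2}(\mathbb{R}^{n+1}_{+})$, split $F=F_{+}+F_{-}$ and form the extensions $F_{\pm,e}$ of $F_{\pm}$ across $\{x_{n}=0\}$, choosing the parity componentwise (even in $x_{n}$ for the tangential components of the vector field, odd for the normal component) so that $\operatorname{div}_{x}F_{\pm,e}$ is the even extension of $\operatorname{div}_{x}F_{\pm}$; by \eqref{ee2.1} of Proposition~\ref{le2.33}, $F_{\pm,e}\in T^{\infty,2}(\mathbb{R}^{n+1}_{+})$ with comparable norm. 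Since $e^{-s\L}$, the resolvents $(I+s\L)^{-1}$, and hence $(s\L)^{-1}(I-e^{-2s\L})$, act on functions that are even in $x_{n}$ over $\mathbb{R}^{n}_{\pm}$ exactly as the corresponding operators built from $-\Delta$, while $\operatorname{div}_{x}$ intertwines these reflections, one obtains $(\mathcal{T}F)\big|_{\mathbb{R}^{n}_{\pm}\times(0,\infty)}=(\widetilde{\mathcal{T}}F_{\pm,e})\big|_{\mathbb{R}^{n}_{\pm}\times(0,\infty)}$, where $\widetilde{\mathcal{T}}$ denotes $\mathcal{T}$ with $\L$ replaced by $-\Delta$. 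Combined once more with \eqref{ee2.1}, the lemma reduces to the same statement for $\widetilde{\mathcal{T}}$, which is exactly what is proved in \cite{AF1}.

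To give a self-contained argument for $\widetilde{\mathcal{T}}$ (equivalently, directly for $\mathcal{T}$), I would proceed as follows. Write $\operatorname{div}_{x}=\L^{1/2}\mathcal{R}$ with $\mathcal{R}:=\L^{-1/2}\operatorname{div}_{x}$; this $\mathcal{R}$ is bounded on $L^{2}(\RR)$, being (up to sign) the adjoint of the Riesz transform $\nabla_{x}\L^{-1/2}$, which is an isometry because the quadratic form of $\L=-\Delta_{N}$ is $f\mapsto\int_{\RR}|\nabla_{x}f|^{2}$. Using the $\sqrt{s}$-normalisation of $\mathcal{T}$ that makes it scale invariant, one then has the factorisation $T_{s}=\psi(s\L)\,\mathcal{R}$, where $\psi(z)=z^{-1/2}(1-e^{-2z})$ is holomorphic and bounded on every sector $\Sigma_{\mu}$, with $\psi(z)=O(|z|^{1/2})$ as $z\to0$ and $\psi(z)=O(|z|^{-1/2})$ as $z\to\infty$, so $\psi\in H^{\infty}_{0}(\Sigma_{\mu})$. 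Hence $(\psi(s\L))_{s>0}$ is uniformly bounded on $L^{2}$, and by \cite[Lemma~1.19]{12} together with the $H^{\infty}$-calculus (representing $\psi(s\L)$ via the heat semigroup) it satisfies $L^{2}$ off-diagonal estimates at scale $\sqrt{s}$ of arbitrarily high polynomial order. These are precisely the hypotheses under which the averaging map $F\mapsto\bigl((s,x)\mapsto T_{s}F(s,\cdot)(x)\bigr)$ is bounded on $T^{\infty,2}(\mathbb{R}^{n+1}_{+})$: fixing a ball $B=B(x_{B},r_{B})$ one splits $F(s,\cdot)$ over the dyadic annuli $2^{j+1}B\setminus 2^{j}B$, handles the piece supported in $2B$ by the uniform $L^{2}$ bound (the range $s<r_{B}^{2}$ keeps $T_{s}$ at scale $\le r_{B}$), and sums the off-diagonal tails over $j$ as a convergent geometric series — the same scheme used for $\mathcal{M}^{+}$ in \cite[Theorem~3.2]{9}.

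The step I expect to require the most care is the bookkeeping in the reflection: $\operatorname{div}_{x}$ does not commute with a blanket even extension, so the tangential and normal components of $F$ must be reflected with opposite parities, and one must verify that this is compatible with the Neumann boundary condition $\partial_{x_{n}}(\cdot)|_{x_{n}=0}=0$ carried by $e^{-s\L}$; this is the only point where the structure of $\Delta_{N}$, as opposed to $\Delta$, genuinely enters. The remaining ingredients — the functional-calculus identity for $T_{s}$, the off-diagonal estimates, and the tent-space splitting — are classical and carry over verbatim from the Euclidean case.
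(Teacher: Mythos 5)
Your proposal is correct, and its first half is exactly the paper's route: the paper also splits $F$ into its restrictions to the two half-spaces, converts $e^{-\mu\L}$ into the free heat semigroup acting on reflected data, and concludes from the corresponding Euclidean bound (essentially \cite{AF1}) together with the norm equivalence \eqref{ee2.1} of Proposition~\ref{le2.33}. You go beyond the paper in two respects. First, you keep track of the parity of the extension componentwise (even for the tangential components, odd for the normal one); this is indeed the choice under which $e^{-\mu\L}\operatorname{div}$, interpreted through the kernel $\nabla_y p_{\mu,\Delta_N}(x,y)$, coincides on each half-space with the Euclidean operator applied to the extension, whereas the paper silently applies an even extension to everything and never addresses the point. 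Second, you actually prove the reduced (Euclidean, or directly Neumann) statement instead of asserting it: the factorisation $T_s=\psi(s\L)\mathcal{R}$ with $\mathcal{R}=\L^{-1/2}\operatorname{div}$ bounded on $L^2$ and $\psi(z)=z^{-1/2}(1-e^{-2z})\in H^\infty_0$, uniform $L^2$ bounds plus off-diagonal estimates at scale $\sqrt{s}$, and the dyadic-annuli tent-space argument; this is the same machinery the paper invokes via \cite[Theorem 3.2]{9} only for Lemma~\ref{le2.34}, so your version is more self-contained. One small normalisation remark: the lemma as stated has $(s\L)^{-1}(I-e^{-2s\L})\operatorname{div}$, which would produce an extra factor $s^{-1/2}$ after inserting $\operatorname{div}=\L^{1/2}\mathcal{R}$; your choice $\psi(z)=z^{-1/2}(1-e^{-2z})$ corresponds to the form $s^{-1/2}\int_0^{2s}e^{-\mu\L}\operatorname{div}\,d\mu$ actually used in the paper's proof and needed for the decomposition of $\mathcal{A}$ applied to $s^{1/2}\alpha$, so you have matched the intended (scale-invariant) operator rather than the misprinted one.
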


\begin{proof}
Since
$$T_{s}=:-s^{-1/2}\int_{0}^{2s}e^{-\mu \L}div d\mu,$$
the estimate
 \begin{align*}
& t^{-n/2} \int_0^{t}\int_{B(x, \sqrt{t})}  \left|s^{-1/2}\int_{0}^{\infty}e^{-\mu \L}div d\mu f(y,s)\right|^2
{dy ds }\\
 &\ \ \ \ \ \ \ \le \frac{1}{2}\left(t^{-n/2} \int_0^{t}\int_{B(x, \sqrt{t})}  \left|s^{-1/2}\int_{0}^{2s}e^{-\mu\L}div d\mu f_{+,e}(y,s)\right|^2
{dy ds }\right)\\
&\ \ \ \ \ \ \ \ \ \ \ +\frac{1}{2}\left( t^{-n/2} \int_0^{t}\int_{B(x, \sqrt{t})}  \left|s^{-1/2}\int_{0}^{2s}e^{-\mu \L}div d\mu f_{-,e}(y,s)\right|^2
{dy ds }\right)\\
&\ \ \ \ \ \ \ \le \frac{1}{2}\left\|s^{-1/2}\int_{0}^{2s}e^{-\mu \L}div f_{+,e}d\mu \right\|_{T^{\infty,2}(\mathbb{R}^{n})}+
\frac{1}{2}\left\|s^{-1/2}\int_{0}^{2s}e^{-\mu \L}div f_{-,e}d\mu \right\|_{T^{\infty,2}(\mathbb{R}^{n+1}_{+})}\\
&\ \ \ \ \ \ \ \le\frac{1}{2}\left\| f_{+,e}\right\|_{T^{\infty,2}(\mathbb{R}^{n+1}_{+})}+\frac{1}{2}
\left\| f_{-,e} \right\|_{T^{\infty,2}(\mathbb{R}^{n+1}_{+})}
\end{align*}
holds true. Therefore, the desired estimate now follows  due to the Proposition \ref{le2.33}. We complete the proof of Lemma \ref{le3.55}.
\end{proof}
%
An application of Fubini¡¯s theorem yields
\begin{align*}
&\left<\mathcal{A}_{2}F, G\right>=\int_{\mathbb{R}^{n}}\int_{0}^{\infty}e^{(t+s)\Delta_{N}}div F(s,y)ds\cdot\overline{G(t,y)}dtdy\\
&\ \ \ \ \ \ \ =\int_{\mathbb{R}^{n}}\int_{0}^{\infty}F(s,y)\overline{\nabla e^{(t+s)\Delta_{N}}G(t,y)}dtdsdy\\
&\ \ \ \ \ \ \ =\left<F, \nabla e^{s\Delta_{N}}\int_{0}^{\infty}e^{t\Delta_{N}}G(t,\cdot)dt\right>=\left<F, \mathcal{A}_{2}^{*}G\right>.
\end{align*}

We thus have the following lemma.
\begin{lem} \label{lee2.4e}
There exists a positive constant $c>0$ such that
$$\left\|(\mathcal{A}_{2}^{*}G)(s,\cdot)\right
\|_{T^{1,\infty}(\mathbb{R}^{n+1}_{+})}
=:\left\|\nabla e^{s\Delta_{N}}\int_{0}^{\infty}e^{t\Delta_{N}}G(t,\cdot)dt
\right\|_{T^{1,\infty}(\mathbb{R}^{n+1}_{+})}\leq \left\|G\right\|_{T^{1,2}(\mathbb{R}^{n+1}_{+})}$$
\end{lem}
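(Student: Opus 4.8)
The plan is to reduce the statement to the corresponding mapping property for the classical heat semigroup on $\mathbb{R}^n$ and then to run the standard tent space argument. First I would record the Fubini identity already displayed above, namely $(\mathcal{A}_{2}^{*}G)(s,\cdot)=\nabla e^{s\Delta_{N}}\int_{0}^{\infty}e^{t\Delta_{N}}G(t,\cdot)\,dt$, so that the claim becomes the boundedness $T^{1,2}(\mathbb{R}^{n+1}_{+})\to T^{1,\infty}(\mathbb{R}^{n+1}_{+})$ of the operator $G\mapsto \nabla e^{s\Delta_{N}}\int_{0}^{\infty}e^{t\Delta_{N}}G(t,\cdot)\,dt$. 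Using $(\Delta_{N}f)_{\pm}=\Delta_{N_{\pm}}f_{\pm}$ together with the reflection identity $e^{s\Delta_{N_{\pm}}}g=\big(e^{s\Delta}g_{\pm,e}\big)\big|_{\mathbb{R}^{n}_{\pm}}$ and the even-extension norm equivalences of Proposition \ref{le2.33} (which hold for the $T^{1,2}$ and $T^{1,\infty}$ norms by the same argument that gives \eqref{ee2.1} and \eqref{ee2.3}), it suffices to prove $\big\|\nabla e^{s\Delta}\int_{0}^{\infty}e^{t\Delta}H(t,\cdot)\,dt\big\|_{T^{1,\infty}(\mathbb{R}^{n+1}_{+})}\lesssim \|H\|_{T^{1,2}(\mathbb{R}^{n+1}_{+})}$ for $H=G_{\pm,e}$; at this point one uses, exactly as in the proof of Proposition \ref{leq2.2}, that the Neumann kernel $p_{t,\Delta_{N}}$ is dominated by and assembled from the Gaussian kernel.

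For the classical estimate I would use the atomic decomposition of the parabolic tent space $T^{1,2}$: write $H=\sum_{j}\lambda_{j}A_{j}$ with $\sum_{j}|\lambda_{j}|\lesssim\|H\|_{T^{1,2}}$, each $A_{j}$ supported in a parabolic Carleson box $\widehat{Q_{j}}=Q_{j}\times(0,\ell_{j}^{2})$ over a cube $Q_{j}$ of side length $\ell_{j}$ and normalized by $\ell_{j}^{-n}\int\!\!\int_{\widehat{Q_{j}}}|A_{j}|^{2}\,dy\,ds\lesssim |Q_{j}|^{-1}$. Since the nontangential maximal functional defining the $T^{1,\infty}$ norm is sublinear, it is enough to prove the uniform bound $\big\|\nabla e^{s\Delta}h_{j}\big\|_{T^{1,\infty}}\lesssim 1$, where $h_{j}=\int_{0}^{\infty}e^{t\Delta}A_{j}(t,\cdot)\,dt$.

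For a fixed atom I would split the outer $L^{1}_{x}$-integral over $\mathbb{R}^{n}$ into the local part $x\in 4Q_{j}$ and the global part $x\notin 4Q_{j}$. On the local part I would use the $L^{2}$-boundedness of the semigroup and of its nontangential maximal operator (a consequence of the Gaussian bounds for $e^{t\Delta}$) together with the parabolic scaling of $\widehat{Q_{j}}$ to bound $\big\|\,\sup_{(y,s)\in\Gamma(x)}|\nabla e^{s\Delta}h_{j}(y)|\,\big\|_{L^{2}(4Q_{j})}$, and then apply Cauchy–Schwarz with $|4Q_{j}|^{1/2}$; the atom normalization makes the resulting constant independent of $j$. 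On the global part I would exploit the off-diagonal Gaussian decay of the kernel of $\nabla e^{s\Delta}$ in the pointwise form $\big|\nabla p_{\sigma,\Delta}(y,z)\big|\lesssim \sigma^{-n/2}\big(1+\sigma^{-1/2}|y-z|\big)^{-n-1}$ already used in the proof of Proposition \ref{leq2.2}: for $(y,s)\in\Gamma(x)$ with $x\notin 4Q_{j}$ one has $|y-z|\gtrsim |x-x_{Q_{j}}|$ whenever $z\in Q_{j}$, which produces a tail that is integrable in $x$ and uniformly bounded in $j$ after summing. Combining the two parts gives the atom bound, and summing against $\sum_{j}|\lambda_{j}|$ completes the proof.

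The main obstacle I anticipate is the bookkeeping in the local estimate: one must track the parabolic (rather than isotropic) scaling of the Carleson box, the range of heights $s$ and $t$ that actually contribute (only $s,t\lesssim\ell_{j}^{2}$, after using the support of $A_{j}$ and the decay of the two kernels), and the precise $T^{1,2}$-atom normalization, so that the final constant is genuinely independent of the atom. If one prefers to bypass this, the cleanest alternative is to note that $\Delta_{N}$ (equivalently $\Delta$ on each half-space via reflection) satisfies Gaussian heat-kernel bounds and has a bounded $H^{\infty}$-calculus on $L^{2}$, so the classical estimate is exactly the content of the corresponding lemma of Auscher and Frey \cite{AF1}, and only the even-extension reduction of the first paragraph needs to be carried out here.
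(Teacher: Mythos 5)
Your overall architecture (even-extension reduction to the classical heat semigroup, then either a direct tent-space argument or a citation of Auscher--Frey) is legitimate, and your fallback option is in fact closest to what the paper does: the paper also reduces by even extension and then leans on the $H^1$/$H^1_{\Delta_N}$ square-function machinery (duality of the Carleson-type quantity with Hardy space norms) rather than estimating atoms directly. Your atomic-decomposition route is genuinely different and more self-contained, but as sketched it has two concrete soft spots that would make the argument fail if executed literally.

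First, the local part. You propose to bound $\bigl\|\sup_{(y,s)\in\Gamma(x)}|\nabla e^{s\Delta}h_j(y)|\bigr\|_{L^2(4Q_j)}$ using ``the $L^2$-boundedness of the semigroup and of its nontangential maximal operator.'' That is not enough: $\nabla e^{s\Delta}$ costs a factor $s^{-1/2}$, and the obvious route (pointwise bound $|\nabla e^{(s+t)\Delta}A_j(t,\cdot)(y)|\lesssim (s+t)^{-1/2}\mathcal{M}A_j(t,\cdot)(x)$ followed by Minkowski/Cauchy--Schwarz in $t$) produces $\int_0^{\ell_j^2}t^{-1/2}\|A_j(t,\cdot)\|_{L^2}\,dt$, which is \emph{not} uniformly bounded over atoms (take $\|A_j(t,\cdot)\|_{L^2}^2\sim \ell_j^{-n}t^{-1}\log^{-2}(e\ell_j^2/t)$). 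The estimate is still true, but the mechanism you must invoke is the cancellation in the $t$-integral: commute the gradient with the semigroup, $N(\nabla e^{s\Delta}h_j)\lesssim \mathcal{M}(\nabla h_j)$, and then control $\|\nabla h_j\|_{L^2}=\|\nabla\int_0^{\ell_j^2}e^{t\Delta}A_j(t,\cdot)\,dt\|_{L^2}\lesssim \|A_j\|_{L^2(dy\,ds)}$ via the Littlewood--Paley energy identity (equivalently, Hilbert's inequality applied to $\langle\nabla e^{t\Delta}A_j(t),\nabla e^{t'\Delta}A_j(t')\rangle\lesssim (t+t')^{-1}\|A_j(t)\|_{L^2}\|A_j(t')\|_{L^2}$). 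This is the key ingredient your sketch omits; without it the local estimate does not close. (Relatedly, the atom normalization should be $\iint_{\widehat{Q_j}}|A_j|^2\,dy\,ds\lesssim|Q_j|^{-1}$, not $\lesssim 1$ as you wrote; and passing from uniform atom bounds to boundedness on all of $T^{1,2}$ needs the usual limiting/Fatou remark.)

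Second, the global part. For $x\notin 4Q_j$ you claim that $(y,s)\in\Gamma(x)$ and $z\in Q_j$ force $|y-z|\gtrsim|x-x_{Q_j}|$. This is false for large $s$: the parabolic cone at height $s$ has width $\sqrt{s}$, so once $s\gtrsim|x-x_{Q_j}|^2$ the point $y$ may lie directly over $Q_j$ and the Gaussian factor gives no decay at all. The global estimate still works, but only after splitting in $s$: for $s\lesssim|x-x_{Q_j}|^2$ your geometric argument applies, while for $s\gtrsim|x-x_{Q_j}|^2$ the decay must come from the prefactor $(s+t)^{-(n+1)/2}\lesssim|x-x_{Q_j}|^{-n-1}$ in the bound for $\nabla p_{s+t,\Delta}$, combined with $\iint|A_j|\,dy\,ds\lesssim\ell_j$ so that the tail integrates in $x$ to a constant. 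With these two repairs (and the $T^{1,2}$/$T^{1,\infty}$ analogues of the even-extension equivalences, which you correctly flag), your plan becomes a complete and genuinely alternative proof; as it stands, the two steps above are real gaps rather than bookkeeping.
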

\begin{proof}
To derive the estimate of $\mathcal{A}_{2}^{*}G$   in the $T^{1,\infty}(\mathbb{R}^{n+1}_{+})$ norm, we employ the theory of Hardy spaces associated with operators $\Delta_{N}$. Recall that a  Hardy-type space associated to $\Delta_{N}$ was introduced in \cite{DDSY},  defined  by
\begin{equation}\label{e2.7}
 H^1_{\Delta_{N}}(\mathbb{R}^{n})=\big\{ f\in L^1(\mathbb{R}^{n}): S_{\Delta_{N}}(f)(x)\in L^1(\RR) \big\}
\end{equation}
in the norm of
$$
\left\|f\right\|_{H^1_{\Delta_{N}}(\mathbb{R}^{n})}=\left\| S_{\Delta_{N}}(f)\right\|_{L^1(\RR)},
$$
where $$S_{\Delta_{N}}(f)(x)=\left(\int_{0}^{\infty}\int_{|y-x|<t}
\left|t^{2}\Delta_{N}\exp(t^{2}\Delta_{N})f(y)\right|^{2}\frac{dydt}{t^{n+1}}
\right)^{\frac{1}{2}}.$$

By the characterization of $H^1_{\Delta_{N}}(\mathbb{R}^{n})$, we find that
 \begin{align*}
& t^{-n/2} \int_0^{t}\int_{B(x, \sqrt{t})}  \left|\nabla e^{s\Delta_{N}}\int_{0}^{\infty}e^{\theta\Delta_{N}}
G(\theta,\cdot)d\theta\right|
{dy ds }\\
 &\ \ \ \ \ \le \frac{1}{2}\left(t^{-n/2} \int_0^{t}\int_{B(x, \sqrt{t})}  \left|\nabla e^{s\Delta}\left[\int_{0}^{\infty}e^{\theta\Delta_{N}}
G(\theta,\cdot)d\theta\right]_{+,e}\right|
{dy ds }\right)\\
&\ \ \ \ \ \ \ \ +\frac{1}{2}\left(t^{-n/2} \int_0^{t}\int_{B(x, \sqrt{t})}  \left|\nabla e^{s\Delta}\left[\int_{0}^{\infty}e^{\theta\Delta_{N}}
G(\theta,\cdot)d\theta\right]_{-,e}\right|
{dy ds }\right)\\
&\ \ \ \ \ \le\frac{1}{2}\left\| \left[\nabla \int_{0}^{\infty}e^{\theta\Delta_{N}}G(\theta,\cdot)d\theta
\right]_{+,e}\right\|_{H^{1}(\mathbb{R}^{n})}+\frac{1}{2}\left\| \left[\nabla \int_{0}^{\infty}e^{\theta\Delta_{N}}G(\theta,\cdot)d\theta
\right]_{-,e}\right\|_{H^{1}(\mathbb{R}^{n})}\\
&\ \ \ \ \ \le\left\| \nabla \int_{0}^{\infty}e^{\theta\Delta_{N}}G(\theta,\cdot)d\theta
\right\|_{H^{1}_{\Delta_{N}}(\mathbb{R}^{n})}.
\end{align*}

From the definition of $H^1_{\Delta_{N}}(\mathbb{R}^{n})$, we have
\begin{align*}
&\left\| \nabla \int_{0}^{\infty}e^{\theta\Delta_{N}}G(\theta,\cdot)d\theta
\right\|_{H^{1}_{\Delta_{N}}(\mathbb{R}^{n})}\\
&=\int_{\mathbb{R}^{n}}\left|\left(\int_{0}^{\infty}\int_{|y-x|<t}
\left|t^{2}\Delta_{N}\exp(t^{2}\Delta_{N})\left(\nabla \int_{0}^{\infty}e^{\theta\Delta_{N}}G(\theta,y)d\theta\right)\right|^{2}\frac{dydt}{t^{n+1}}
\right)^{\frac{1}{2}}\left(x\right)\right|dx\\
 &\le \frac{1}{2}\left(t^{-n/2} \int_0^{t}\int_{B(x, \sqrt{t})}  \left|\nabla e^{s\Delta}\left[\int_{0}^{\infty}e^{\theta\Delta_{N}}
G(\theta,\cdot)d\theta\right]_{+,e}\right|
{dy ds }\right)\\
&\ \ \ \ \ \ \ +\frac{1}{2}\left(t^{-n/2} \int_0^{t}\int_{B(x, \sqrt{t})}  \left|\nabla e^{-s\Delta}\left[\int_{0}^{\infty}e^{\theta\Delta_{N}}
G(\theta,\cdot)d\theta\right]_{-,e}\right|
{dy ds }\right)\\
&\lesssim\left\| \left[\nabla \int_{0}^{\infty}e^{\theta\Delta}G_{+,e}(\theta,\cdot)d\theta
\right]\right\|_{H^{1}(\mathbb{R}^{n})}+\left\| \left[\nabla \int_{0}^{\infty}e^{\theta\Delta}G_{+,e}(\theta,\cdot)d\theta
\right]\right\|_{H^{1}(\mathbb{R}^{n})}\\
&\lesssim \left\|G_{+,e}\right\|_{T^{1,2}(\mathbb{R}^{n+1}_{+})}
+\left\|G_{-,e}\right\|_{T^{1,2}(\mathbb{R}^{n+1}_{+})}.
\end{align*}
We complete the proof of Lemma \ref{lee2.4e}.
\end{proof}

\begin{lem}\label{leee2.e5}
 Let $$RF(\cdot, s)=\int_{s}^{\infty} e^{(s+\tau)\Delta_{N}}\tau^{-\frac{1}{2}}div F(\cdot, \tau)d\tau.$$ Then we have
 $$\left\|RF(\cdot, t)\right\|_{T^{\infty,2}(\mathbb{R}^{n+1}_{+})}\leq\left\|F(\cdot, t)\right\|_{T^{\infty,2}(\mathbb{R}^{n+1}_{+})}.$$
\end{lem}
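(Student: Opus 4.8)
The plan is to follow the two-step scheme already used for Lemmas~\ref{le3.55} and \ref{lee2.4e}: first transfer the estimate from the Neumann operator $\Delta_N$ to the ordinary Laplacian by means of even/odd reflections across $\{x_n=0\}$, and then dispose of the resulting flat estimate by tent-space duality together with the heat-semigroup off-diagonal bounds, exactly as \cite{AF1} does for the corresponding term on $\mathbb R^n$. Concretely, I would start from the identities $(e^{(s+\tau)\Delta_N}g)_\pm=e^{(s+\tau)\Delta_{N_\pm}}g_\pm$ and $e^{(s+\tau)\Delta_{N_\pm}}g_\pm=\big(e^{(s+\tau)\Delta}g_{\pm,e}\big)\big|_{\mathbb R^n_\pm}$ used in Theorem~\ref{th1.1} and Proposition~\ref{le2.33}, reflecting the tangential components of $F$ evenly and the normal component oddly, so that $\mathrm{div}$ of the reflected field is the even extension of $\mathrm{div}\,F$ and the Neumann condition is automatically respected; since $s+\tau>0$ throughout the integral, $e^{(s+\tau)\Delta_N}$ is a genuine semigroup operator and this reduction is legitimate. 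Combining it with relation \eqref{ee2.1} of Proposition~\ref{le2.33}, it then suffices to prove the flat estimate
$$\big\|R_\Delta G\big\|_{T^{\infty,2}(\mathbb R^{n+1}_+)}\lesssim \big\|G\big\|_{T^{\infty,2}(\mathbb R^{n+1}_+)},\qquad R_\Delta G(\cdot,s):=\int_s^\infty e^{(s+\tau)\Delta}\,\tau^{-1/2}\,\mathrm{div}\,G(\cdot,\tau)\,d\tau,$$
for $G=F_{+,e}$ and $G=F_{-,e}$, which is, up to the reflection, an estimate for $e^{t\Delta}$ on $\mathbb R^n$ of the kind established in \cite{AF1}.

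For the flat estimate I would argue by duality, using that $T^{\infty,2}(\mathbb R^{n+1}_+)$ is the dual of $T^{1,2}(\mathbb R^{n+1}_+)$. Moving the divergence onto the test function by self-adjointness of $e^{(s+\tau)\Delta}$ and integration by parts, then applying Fubini on $\{0<s<\tau\}$ as in the computation preceding Lemma~\ref{lee2.4e}, one obtains $\langle R_\Delta G,H\rangle=\langle G,R_\Delta^{*}H\rangle$ with
$$R_\Delta^{*}H(\cdot,\tau)=-\tau^{-1/2}\int_0^\tau \nabla e^{(s+\tau)\Delta}H(\cdot,s)\,ds,$$
so it remains to show $\|R_\Delta^{*}H\|_{T^{1,2}(\mathbb R^{n+1}_+)}\lesssim\|H\|_{T^{1,2}(\mathbb R^{n+1}_+)}$. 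The operator $R_\Delta^{*}$ has a causal, maximal-regularity-type structure, and I would control it exactly as $\mathcal M^{+}$ is controlled in Lemma~\ref{le2.34}: the family $\big(\sqrt{u}\,\nabla e^{u\Delta}\big)_{u>0}$ satisfies $L^2$ Gaussian (hence off-diagonal) estimates by \cite{12}, so that the kernel $\tau^{-1/2}\nabla e^{(s+\tau)\Delta}$ with $0<s<\tau$ falls under the general $T^{1,2}\to T^{1,2}$ boundedness criterion of \cite[Theorem 3.2]{9}; alternatively, one can adapt the conical square-function / $H^1_{\Delta_N}$ argument of Lemma~\ref{lee2.4e} directly. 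The singular weight $\tau^{-1/2}$ is harmless because of parabolic scaling: since $s+\tau\sim\tau$ on the range of integration, $\tau^{-1/2}(s+\tau)^{-1/2}\lesssim\tau^{-1}$, while the remaining $s$-integration produces the factor $\tau^{1/2}$ that restores the correct homogeneity, so a Schur-type test in the time variable closes the argument once the Gaussian spatial factor is used to localize to the pertinent Carleson box.

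The main obstacle is the near-diagonal regime $s\sim\tau$, where $e^{(s+\tau)\Delta}$ gives no smallness in time and one must rely solely on the $\tau^{-1/2}$-smoothing of the gradient together with the Carleson structure of $G$ (respectively $H$); this is precisely the point where the general tent-space machinery of \cite{9} is needed rather than a crude pointwise estimate. The only other thing to monitor is that, after reflection, the contributions of $F_{+,e}$ and $F_{-,e}$ do not leak across $\{x_n=0\}$, which is guaranteed by relation \eqref{ee2.1} and the reflection conventions fixed at the outset; everything else is the same tent-space bookkeeping already carried out in Proposition~\ref{le2.33} and Lemmas~\ref{le3.55}--\ref{lee2.4e}.
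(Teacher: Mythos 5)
Your first half coincides with the paper's: it, too, reduces to the flat Laplacian by splitting over $\mathbb{R}^n_{\pm}$ and replacing $e^{(s+\tau)\Delta_N}$ by $e^{(s+\tau)\Delta}$ acting on even extensions (your componentwise even/odd reflection of $F$, which makes $\operatorname{div}$ commute with the extension, is a cleaner justification of that step). You diverge at the flat-space estimate. The paper essentially asserts it: after reflecting, its displayed chain replaces $\int_s^\infty e^{(s+\tau)\Delta}\tau^{-1/2}\operatorname{div}F\,d\tau$ by the operator $s^{-1/2}\int_0^{2s}e^{\mu\Delta}\operatorname{div}f_{\pm,e}\,d\mu$ of Lemma \ref{le3.55} and quotes that bound, so the real work for $R$ is implicitly delegated to \cite{AF1}. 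You instead dualize against $T^{1,2}$, identify $R_\Delta^{*}H(\cdot,\tau)=-\tau^{-1/2}\nabla e^{\tau\Delta}\int_0^\tau e^{s\Delta}H(\cdot,s)\,ds$, and bound it by off-diagonal estimates; this is essentially the Auscher--Frey route, closer in spirit to the treatment of $\mathcal{A}_2^{*}$ in Lemma \ref{lee2.4e} than to Lemma \ref{le3.55}, and it buys an actual proof of the step the paper leaves implicit. Two details to repair: \cite[Theorem 3.2]{9} is stated for the maximal regularity operator $\int_0^t Le^{-(t-s)L}F(s)\,ds$ and does not literally cover the kernel $\tau^{-1/2}\nabla e^{(s+\tau)\Delta}$ with $0<s<\tau$, so you must either run the method of \cite{9} (off-diagonal bounds plus the atomic decomposition of $T^{1,2}$) or carry out your alternative adaptation of Lemma \ref{lee2.4e}; and the $s$-integration over $(0,\tau)$ produces a factor $\tau$, not $\tau^{1/2}$ (harmless, since it pairs with the $\tau^{-1}$ kernel size), while a bare Schur test in the time variable indeed diverges at the upper limit, so the atomic/Carleson structure is genuinely needed, exactly as you anticipate in your final paragraph.
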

 \begin{proof}
We can see that for any $t>0$ ,
 \begin{align*}
& t^{-n/2} \int_0^{t}\int_{B(x, \sqrt{t})}  \left|RF(\cdot, s)\right|^2
{dy ds }\\
 &\ \ \ \ \ \le \left(t^{-n/2} \int_0^{t}\int_{|y-x|<\sqrt{t},y\in\mathbb{R}_{+}^{n}}  \left|\int_{s}^{\infty} e^{(s+\tau)\Delta}\tau^{-\frac{1}{2}}div F(\cdot, \tau)d\tau\right|^2
{dy ds }\right)\\
&\ \ \ \ \ \ \ \ \ \ +\left( t^{-n/2} \int_0^{t}\int_{|y-x|<\sqrt{t},y\in\mathbb{R}_{-}^{n}}  \left|\int_{s}^{\infty} e^{(s+\tau)\Delta}\tau^{-\frac{1}{2}}div F(\cdot, \tau)d\tau\right|^2
{dy ds }\right)\\
&\ \ \ \ \ \le \frac{1}{2}\left\|s^{-1/2}\int_{0}^{2s}e^{\mu \Delta}div f_{+,e}d\mu \right\|_{T^{\infty,2}(\mathbb{R}^{n})}+
\frac{1}{2}\left\|s^{-1/2}\int_{0}^{2s}e^{\mu \Delta}div f_{-,e}d\mu \right\|_{T^{\infty,2}(\mathbb{R}^{n})}\\
&\ \ \ \ \ \le\frac{1}{2}\left\| f_{+,e}\right\|_{T^{\infty,2}(\mathbb{R}^{n+1}_{+})}+\frac{1}{2}
\left\| f_{-,e} \right\|_{T^{\infty,2}(\mathbb{R}^{n+1}_{+})},
\end{align*}
 which completes the proof of Lemma \ref{leee2.e5}.
 \end{proof}

  \smallskip

\begin{prop} \label{leA2.2} Let $u, v\in \varepsilon$. Then we have that \begin{align*}
\left\|\mathcal{A}(\alpha)\right\|_{T^{\infty,2}(\mathbb{R}^{n+1}_{+})}
\leq C\left\|\alpha\right\|_{T^{\infty,1}(\mathbb{R}^{n+1}_{+})}
+C\left\|s^{\frac{1}{2}}\alpha(s)\right\|_{T^{\infty,2}(\mathbb{R}^{n+1}_{+})}.
\end{align*}
\end{prop}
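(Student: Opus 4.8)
The plan is to exploit the decomposition $\mathcal{A}(\alpha)=\mathcal{A}_1(\alpha)+\mathcal{A}_2(\alpha)+\mathcal{A}_3(\alpha)$ introduced just before the statement, and estimate each of the three pieces in the $T^{\infty,2}(\mathbb{R}^{n+1}_+)$ norm separately using the lemmas already at our disposal. For $\mathcal{A}_1(\alpha)(t)=\int_0^t e^{-(t-s)\L}L\mathcal{T}(s^{1/2}\alpha(s,\cdot))\,ds$, I would first apply Lemma~\ref{le3.55}, which gives that $\mathcal{T}$ maps $T^{\infty,2}(\mathbb{R}^{n+1}_+)$ to itself; hence $\mathcal{T}(s^{1/2}\alpha)$ has $T^{\infty,2}$ norm controlled by $\|s^{1/2}\alpha(s)\|_{T^{\infty,2}(\mathbb{R}^{n+1}_+)}$. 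Then I would apply Lemma~\ref{le2.34}, the boundedness of the maximal regularity operator $\mathcal{M}^+$ on $T^{\infty,2}(\mathbb{R}^{n+1}_+)$, to conclude $\|\mathcal{A}_1(\alpha)\|_{T^{\infty,2}(\mathbb{R}^{n+1}_+)}\lesssim \|s^{1/2}\alpha(s)\|_{T^{\infty,2}(\mathbb{R}^{n+1}_+)}$.

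For $\mathcal{A}_3(\alpha)(t)=-\int_t^\infty e^{-(t-s)\L}s^{-1/2}\mathrm{div}_x(s^{1/2}\alpha(s,\cdot))\,ds$, I would observe this is (up to sign and the $e^{(s+t)}$ versus $e^{(s-t)}$ issue, which is handled by the Gaussian bound comparison reducing everything to the classical heat semigroup as in the proof of Proposition~\ref{leq2.2}) exactly the operator $R$ of Lemma~\ref{leee2.e5} applied to $F=s^{1/2}\alpha$; so $\|\mathcal{A}_3(\alpha)\|_{T^{\infty,2}(\mathbb{R}^{n+1}_+)}\lesssim \|s^{1/2}\alpha(s)\|_{T^{\infty,2}(\mathbb{R}^{n+1}_+)}$. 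For $\mathcal{A}_2(\alpha)(t)=\int_0^\infty e^{-(t+s)\L}(\mathrm{div}_x\alpha(s,\cdot))\,ds$, I would argue by duality: the computation preceding Lemma~\ref{lee2.4e} identifies the adjoint $\mathcal{A}_2^*$, and Lemma~\ref{lee2.4e} states $\|\mathcal{A}_2^*G\|_{T^{1,\infty}(\mathbb{R}^{n+1}_+)}\lesssim \|G\|_{T^{1,2}(\mathbb{R}^{n+1}_+)}$. Since $T^{1,\infty}$ and $T^{1,2}$ are the predual tent spaces of $T^{\infty,1}$ and $T^{\infty,2}$ respectively (the tent-space duality $(T^{1,q'})^*=T^{\infty,q}$), this dualizes to $\|\mathcal{A}_2(\alpha)\|_{T^{\infty,2}(\mathbb{R}^{n+1}_+)}\lesssim \|\alpha\|_{T^{\infty,1}(\mathbb{R}^{n+1}_+)}$.

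Combining the three bounds gives
\begin{align*}
\|\mathcal{A}(\alpha)\|_{T^{\infty,2}(\mathbb{R}^{n+1}_+)}
&\le \|\mathcal{A}_1(\alpha)\|_{T^{\infty,2}(\mathbb{R}^{n+1}_+)}
+\|\mathcal{A}_2(\alpha)\|_{T^{\infty,2}(\mathbb{R}^{n+1}_+)}
+\|\mathcal{A}_3(\alpha)\|_{T^{\infty,2}(\mathbb{R}^{n+1}_+)}\\
&\le C\|\alpha\|_{T^{\infty,1}(\mathbb{R}^{n+1}_+)}
+C\|s^{1/2}\alpha(s)\|_{T^{\infty,2}(\mathbb{R}^{n+1}_+)},
\end{align*}
which is exactly the claimed estimate; here one uses implicitly that $\alpha=u\times v$ with $u,v\in\varepsilon$ guarantees all the quantities on the right-hand side are finite.

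The step I expect to be the main obstacle is making the $\mathcal{A}_2$ duality argument fully rigorous in the Neumann setting: one must check that the tent-space duality pairing is legitimate for the operator $\mathcal{A}_2$ on $\Omega=\mathbb{R}^n_+\cup\mathbb{R}^n_-$, that the Fubini interchange producing $\mathcal{A}_2^*$ is justified (integrability of the double integral), and that the even-extension reduction used throughout—replacing $e^{t\Delta_N}$-kernels by the classical heat kernel via the Gaussian domination $0\le p_{t,\Delta_N}(x,y)\le h_t(x-y)$ and the $H^1_{\Delta_N}$ theory from \cite{DDSY}—is applied correctly to the gradient terms, since $\nabla$ does not commute with restriction/even extension at the hyperplane $x_n=0$. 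Once these technical points are addressed exactly as in the proofs of Lemmas~\ref{le3.55}, \ref{lee2.4e} and \ref{leee2.e5}, the assembly is immediate.
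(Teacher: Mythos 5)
Your argument is essentially the paper's own proof: the same splitting $\mathcal{A}=\mathcal{A}_1+\mathcal{A}_2+\mathcal{A}_3$, with Lemmas~\ref{le3.55} and \ref{le2.34} handling $\mathcal{A}_1$, Lemma~\ref{leee2.e5} handling $\mathcal{A}_3$, and the duality pairing $\sup_{\|G\|_{T^{1,2}}\le 1}\langle\alpha,\mathcal{A}_2^*G\rangle$ with Lemma~\ref{lee2.4e} handling $\mathcal{A}_2$. Your added remarks on the $e^{-(t-s)\L}$ versus $e^{-(t+s)\L}$ discrepancy, the Fubini justification, and the even-extension/gradient issue are reasonable cautions, but the overall route coincides with the paper's.
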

\begin{proof}
For $\mathcal{A}(\alpha)$, it follows from Lemmas \ref{le2.34}-\ref{leee2.e5} that
\begin{align*}
\mathcal{A}(\alpha)(t)
&\leq\mathcal{A}_{1}(\alpha)(t)+\mathcal{A}_{2}(\alpha)(t)
+\mathcal{A}_{3}(\alpha)(t)\\
&\leq \left\| T_{s}\left(s^{1/2}\alpha(s, . )\right)\right\|_{T^{\infty,2}(\mathbb{R}^{n+1}_{+})}+\left\|s^{1/2}\alpha(s, . )\right\|_{T^{\infty,2}(\mathbb{R}^{n+1}_{+})}+
\sup_{\|G\|_{T^{1,2}(\mathbb{R}^{n+1}_{+})}\leq1}\left<\alpha(s, . ), \mathcal{A}_{2}^{*}G\right>\\
&\leq\left\| T_{s}\left(s^{1/2}\alpha(s, . )\right)\right\|_{T^{\infty,2}(\mathbb{R}^{n+1}_{+})}+\left\|s^{1/2}\alpha(s, . )\right\|_{T^{\infty,2}(\mathbb{R}^{n+1}_{+})}\\
&\ \ \ \ \ \ \ \ +\sup_{\|G\|_{T^{1,2}(\mathbb{R}^{n+1}_{+})}\leq1}
\left\|(\mathcal{A}_{2}^{*}G)(s,\cdot)\right
\|_{T^{1,\infty}(\mathbb{R}^{n+1}_{+})}
\left\|\alpha\right\|_{T^{\infty,1}(\mathbb{R}^{n+1}_{+})}\\
&\leq C\left\|\alpha\right\|_{T^{\infty,1}(\mathbb{R}^{n+1}_{+})}
+C\left\|s^{\frac{1}{2}}\alpha(s)\right\|_{T^{\infty,2}(\mathbb{R}^{n+1}_{+})}.
\end{align*}
This completes the proof of Proposition \ref{leA2.2}.
\end{proof}

\begin{proof}[ Proof of  the Theorem  \ref{th1.2}.] Now we define the operator
 $$\Theta(u): =e^{t\Delta_{N}}u_{0}-\int_{0}^{t}e^{(t-s)\Delta_{N}}div_{x} (u^{2}(s, . )) ds.$$
 For the existence of small global solution, we adopt the space
 $$D_{2\epsilon}=\{u\in\varepsilon; \ \ 	\|u\|_{\varepsilon}< 2\epsilon\}.$$ It suffices to show that $\Theta(u)$ is a contraction operator mapping $\varepsilon$ into itself.
Indeed, it is readily seen that
$$\left\|\exp(t\triangle_{N})u_{0}\right\|_{\varepsilon}\lesssim
\left\|u_{0}\right\|_{\rm BMO^{-1}_{\Delta_{N}}(\mathbb{R}^{n})}
\leq\epsilon$$
by Proposition~\ref{le3332.2} and
provided that
$\left\|u_{0}\right\|_{\rm BMO^{-1}_{\Delta_{N}}(\mathbb{R}^{n})}\leq\epsilon$.
Let $u, v\in \varepsilon$. It follows from
Propositions  \ref{le2.33}, \ref{leq2.2} and  \ref{leA2.2} that
\begin{align*}
\left\|\Theta(u)\right\|_{\varepsilon}
&\ \ \ \leq \left\|u_{0}\right\|_{\rm BMO^{-1}_{\Delta_{N}}(\mathbb{R}^{n})}+
C\sum_{i=1}^{i=3}\left\|\mathcal{A}_{i}(u\times v)(t)
\right\|_{T^{\infty,2}(\mathbb{R}^{n+1}_{+})}
+\left\|u\times v\right\|_{T^{\infty,1}(\mathbb{R}^{n+1}_{+})}
+C\left\|s(u\times v)(s)\right\|_{L^{\infty}((0, \infty)\times \mathbb{R}^{n})}\\
&\ \ \ \leq \left\|u_{0}\right\|_{\rm BMO^{-1}_{\Delta_{N}}(\mathbb{R}^{n})}
+C\left\|(u\times v)_{+,e}\right\|_{T^{\infty,1}(\mathbb{R}^{n})}
+C\left\|s^{\frac{1}{2}}(u\times v)_{+,e}(s)\right\|_{T^{\infty,2}(\mathbb{R}^{n+1}_{+})}
+\left\|s(u\times v)_{+,e}(s)\right\|_{L^{\infty}((0, \infty)\times \mathbb{R}^{n})}\\
&\ \ \ \ \ \ +C\left\|(u\times v)_{-,e}\right\|_{T^{\infty,1}(\mathbb{R}^{n+1}_{+})}
+C\left\|s^{\frac{1}{2}}(u\times v)_{-,e}(s)\right\|_{T^{\infty,2}(\mathbb{R}^{n+1}_{+})}
+\left\|s(u\times v)_{-,e}(s)\right\|_{L^{\infty}((0, \infty)\times \mathbb{R}^{n})}\\
&\ \ \ \leq \left\|u_{0}\right\|_{BMO^{-1}_{\Delta_{N}}(\mathbb{R}^{n})}
+C\left\|u\right\|_{T^{\infty,2}(\mathbb{R}^{n+1}_{+})}^{2}
+C\left\|s^{\frac{1}{2}}u\right\|_{L^{\infty}((0, \infty)\times \mathbb{R}^{n})}\left\|u\right\|_{T^{\infty,2}(\mathbb{R}^{n+1}_{+})}
+\left\|s^{\frac{1}{2}}u\right\|_{L^{\infty}((0, \infty)\times \mathbb{R}^{n})}^{2}\\
&\ \ \ \le \epsilon+ C\|u\|_{\varepsilon_{\infty}}^{2}<2\epsilon
\end{align*}
and
\begin{align*}
&\|\Theta(u)-\Theta(v)\|_{\varepsilon}\\
 &\ \ \ \le \|\Phi(u-v,u)\|_{\varepsilon}+\|\Phi(v,u-v)\|_{\varepsilon}\\
&\ \ \ \le  \|(u-v)u\|_{T^{\infty,1}(\mathbb{R}^{n+1}_{+})}+ \|(u-v)v\|_{T^{\infty,1}(\mathbb{R}^{n+1}_{+})}+ \|s(u-v)u\|_{L^{\infty}}+ \|s(u-v)v\|_{L^{\infty}}\\
&\ \ \    \ \ \ + \|s^{1/2}(u-v)v\|_{T^{\infty,2}(\mathbb{R}^{n+1}_{+})}+ \|s^{1/2}(u-v)v\|_{T^{\infty,2}(\mathbb{R}^{n+1}_{+})}\\
&\ \ \ \le  \|u-v\|_{T^{\infty,2}}\|(u,v)\|_{T^{\infty,2}(\mathbb{R}^{n+1}_{+})}+ \|s^{\frac{1}{2}}(u-v)\|_{L^{\infty}}(\|s^{\frac{1}{2}}v\|_{L^{\infty}}+\|s^{\frac{1}{2}}u\|_{L^{\infty}})\\
&\ \ \    \ \ \ + \|s^{1/2}(u-v)\|_{L^{\infty}}^{\frac{1}{2}}
\|s^{1/2}v\|_{L^{\infty}}^{\frac{1}{2}}\|u-v\|_{T^{\infty,2}(\mathbb{R}^{n+1}_{+})}^{\frac{1}{2}}
\|v\|_{T^{\infty,2}(\mathbb{R}^{n+1}_{+})}^{\frac{1}{2}}\\
&\ \ \    \ \ \ + \|s^{1/2}(u-v)\|_{L^{\infty}}^{\frac{1}{2}}
\|s^{1/2}u\|_{L^{\infty}}^{\frac{1}{2}}\|u-v\|_{T^{\infty,2}(\mathbb{R}^{n+1}_{+})}^{\frac{1}{2}}
\|u\|_{T^{\infty,2}(\mathbb{R}^{n+1}_{+})}^{\frac{1}{2}}\\
&\ \ \ \le  C\|u-v\|_{\varepsilon}\|(u,v)\|_{\varepsilon}\leq
 C\epsilon\|u-v\|_{\varepsilon},
\end{align*}
provied that $u\in D_{2\epsilon}$ and $C\epsilon<1$, the map $\Theta: D_{2\epsilon}\rightarrow D_{2\epsilon}$ is a contraction and has a fixed point in
$D_{2\epsilon}$, which is the unique solution $u$ for the integral equation satisfying $\|u\|_{ \varepsilon}\leq 2\epsilon$. Hence there exists a unique solution $u \in \varepsilon$ satisfying $u =\Theta(u)$ due to the Banach contraction principle. The proof of Theorem  \ref{th1.2} is complete.
\end{proof}
 \medskip


\end{document}